\title{Moderate, large and super large deviations principles for Poisson process with uniform catastrophes}
\author{Artem Logachov, Olga Logachova and Anatoly Yambartsev    }
\abstract{%
    In this paper, we expand and generalize the findings presented in our previous work on the law of large numbers and the large deviation principle for Poisson processes with uniform catastrophes. We study three distinct scalings: sublinear (moderate deviations), linear (large deviations), and superlinear (super large deviations). Across these scales, we establish different yet coherent rate functions.
    }
\keywords{Markov process, law of large numbers, large deviation principle, rate function.}
\begin{document}

% Here is where the main text should be typed:
\section{Introduction}

\noindent
\textbf{Stochastic processes with catastrophes.} In probability theory and mathematical modeling, population stochastic processes with catastrophes deal with an interplay between ``regular'' randomness in population change and abrupt (catastrophic) change when a significant part of the population disappears. Informally, following \cite{Stir1}, a random process with catastrophes can be represented as the difference between two components: a \textit{regular} component and a \textit{catastrophic} one. These processes serve as a tool for understanding and predicting various phenomena in diverse fields such as ecology and biology \cite{KA, K, RLS-TG, VAME}, economics, see \cite{Cav, Jord, Mart}, queueing systems \cite{Chao, DE1, JLL, LMC, ZSKA}, and more. There is much literature on processes involving catastrophes and we do not claim to provide a comprehensive list of relevant references.

A regular stochastic component represents the evolution of a system over time with minor changes in the value of the process. Frequently, these regular processes are Markov processes.

Markov processes with catastrophes received the attention of the probability community since the 1970's in the context of populational dynamic models. We address the reader to \cite{Brock} for historical review and motivation. The paper \cite{Brock} is probably the first systematic review and characterization of this class of processes. We also refer the reader to \cite{LYL1}, where we tried to provide an extended overview.

When the catastrophe time points form a Markov process, such as a Poisson process, the system retains its Markov property. This ensures that the future behavior of the process depends solely on its current state, simplifying both mathematical treatment and modeling.

\noindent
\textbf{Poisson processes with uniform catastrophes.} In this paper, we study the so-called Poisson processes with uniform catastrophes, as introduced in \cite{LYL}. The process consists of two main components: a Poisson process describing population growth and an independent Poisson point process representing the occurrence of catastrophes. At the time of a catastrophe, an eliminated portion is chosen uniformly, giving rise to the term uniform catastrophes. Since the sum of two independent Poisson processes is a Poisson process, we will need a single Poisson process to define the process, as described in the definition below; see (\ref{def}).

Let us give a formal definition of the process. In every direction below, we assume that all random elements are in the probability space $(\Omega,\mathfrak{F},\bf{P})$, and $\omega\in\Omega$ is an elementary event or a sample point.

Consider Markov chain $\eta(k)$, $k\in \mathbb{Z}^+$, $\mathbb{Z}^+=\{0\}\cup\mathbb{N}$, with the state space $\mathbb{Z}^+$ and transition probabilities
$$
{\bf P}(\eta(k+1)=j \mid \eta(k)=i)=
\left\{
\begin{array}{ll}
\frac{\lambda}{\lambda+\mu}, &\mbox{if }j=i+1,\\
\frac{\mu}{i(\lambda+\mu)}, &\mbox{if }0\leq j<i, i\neq 0,\\
1, &\mbox{if }j=1, i=0,
\end{array}
\right.
$$
where $\lambda$ and $\mu$ are positive constants, parameters of the model. We will always set $0$ as the initial state, $\eta(0)=0$.

Let $\nu(t)$, $t\in \mathbb{R}^+$ be a Poisson process with rate $\alpha$, and suppose that $\nu(\cdot)$ and $\eta(\cdot)$ are independent. The following random process $\xi(t)$ we will call the \textit{Poisson process with uniform catastrophes}:
\begin{equation}\label{defxi}
\xi(t):=\eta(\nu(t)), \ t\in \mathbb{R}^+.
\end{equation}
Suppose the current state of the process $\xi$ is $i$. At the next time point of the Poisson process $\nu(\cdot)$, the integer-valued process increases by one, $i \to i + 1$, with probability $\lambda / (\lambda + \mu)$, corresponding to the Poisson growth of the population (the regular component). Alternatively, with probability $\mu / (\lambda + \mu)$, the time point corresponds to a catastrophic event. In this case, the process uniformly selects the next state from $\{i - 1, i - 2, \ldots, 1, 0\}$, with probability $1/i$, eliminating a portion of the current population.

To study the asymptotic properties, we will consider the sequence of scaled processes
$$
\xi_T(t):=\frac{\xi(Tt)}{\varphi(T)}, \ t\in [0,1],
$$
where $T$ is a monotonically increasing unbounded parameter, i.e., $T \to \infty$, and $\varphi(T)$ is scaling function, $\varphi(T) \to \infty$ when $T\to\infty$.

\noindent
\textbf{Preliminary results.} In \cite{LYL} the law of large numbers (LLN) and the large deviation principle (LDP) on the phase state were proved for the case $\varphi(T)=T$. More precisely, consider the family of the processes $\xi_T(t):=\frac{\xi(Tt)}{T}, \ t\in [0,1]$. We proved that
\begin{enumerate}
\item[\bf LLN:]  for any $\varepsilon>0$ the following a.s. convergence takes place
	\begin{equation}\label{LLN1}	\mathbf{P}\Bigl(\lim\limits_{T\rightarrow\infty}\sup\limits_{t\in[0,1]}\xi_T(t)>\varepsilon\Bigr)=0,
	\end{equation}

\item[\bf LDP:]  the family of random variables $\xi_T(1)$ satisfies LDP, i.e. for any measurable closed set $C$ and open set $O$
$$
\limsup_{T \rightarrow \infty} \frac{1}{T} \ln \mathbf{P}(\xi_T(1) \in C ) \leq - I(C), \ \ \
\liminf_{T \rightarrow \infty} \frac{1}{T} \ln \mathbf{P}(\xi_T(1) \in O ) \geq -I(O),
$$
with the rate function
$$
	I(x)= \left\{
	\begin{array}{ll}
		\ \infty,  & \text{ if } x\in (-\infty,0),\\
		x\ln\left(\frac{\lambda+\mu}{\lambda}\right), & \text{ if } x\in [0,\alpha),\\
		x\ln\left(\frac{x(\lambda+\mu)}{\alpha\lambda}\right)-x+\alpha, & \text{ if } x\in [\alpha,\infty),\\
	\end{array}
	\right.
$$
where $I(B)=\inf_{x\in B}I(x)$, for any measurable set $B$.
\end{enumerate}

\noindent
\textbf{Three regimes.} Instead of the linear scaling $\frac{\xi(Tt)}{T}$ considered in \cite{LYL}, this paper examines a general scaling $\frac{\xi(Tt)}{\varphi(T)}$ under the following three regimes:

\begin{itemize} \item \textit{Sublinear} regime, when $\frac{\varphi(T)}{T} \to 0$, and under the additional condition (\ref{1.3}), which we will refer to as \textit{moderate} deviations. \item \textit{Linear} regime, where $\frac{\varphi(T)}{T} \to k > 0$, which we will refer to as \textit{large} deviations. \item \textit{Superlinear} regime, where $\frac{\varphi(T)}{T} \to \infty$, which we will refer to as \textit{superlarge} deviations. \end{itemize}

LDP holds in all three regimes, and we have determined the rate function for each case. Moreover, each space scaling function $\varphi(T)$ requires a corresponding scaling $\psi(T)$ for the logarithm of the probability in large deviations.

As expected, the rate function in the linear regime matches the one found in \cite{LYL}. In transition from the linear to the sublinear case, the rate function loses the $x\ln x$ term, and it becomes a linear function (for positive values) starting at the origin, with the slope depending on the parameters $\lambda$ and $\mu$. In the superlinear case, the rate function is a straight line (for positive values) and does not depend on the model parameters.

This paper completes the description of large deviations for Poisson processes with uniform catastrophes. The results of the paper are summarized in Table~\ref{tabela}.

\begin{table}
    \centering
    \begin{tabular}{l|c|l}
       $\varphi(T)$  & $\psi(T)$ & rate function \\
       \hline
sublinear & & \\
$\frac{\varphi(T)}{T} \to 0$ with (\ref{1.3}) & $\varphi(T)$ & $
I_1(x)= \left\{
\begin{array}{lcl}
	\ \infty,  \text{ if }\ x\in (-\infty,0),\\
	x\ln\left(\frac{\lambda+\mu}{\lambda}\right), \text{ if }\ x\in [0,\infty).\\
\end{array}
\right.
$
\\
 & & \\
\hline
linear & & \\
$\frac{\varphi(T)}{T} \to k>0$ & $\varphi(T)$ & $
J_k(x)= \left\{
\begin{array}{lcl}
	\ \infty,  \text{ if }\ x\in (-\infty,0),\\
	x\ln\left(\frac{\lambda+\mu}{\lambda}\right), \text{ if }\ x\in \big[0,\frac{\alpha}{k}\big),\\
	x\ln\left(\frac{kx(\lambda+\mu)}{\alpha\lambda}\right)-x+\frac{\alpha}{k}, \text{ if }\ x\in \big[\frac{\alpha}{k},\infty\big).\\
\end{array}
\right.
$ \\
 & & \\
\hline
superlinear & & \\
$\frac{\varphi(T)}{T} \to \infty$ & $\varphi(T)\ln\frac{\varphi(T)}{T}$ & $
I_2(x)= \left\{
\begin{array}{lcl}
	\ \infty,  \text{ if }\ x\in (-\infty,0),\\
	x, \text{ if }\ x\in [0,\infty).\\
\end{array}
\right.
$  \\
 & & \\
\hline    \end{tabular}
    \caption{Three regimes are defined by the asymptotic behavior of the space scale function $\varphi(T)$. For each regime, the normalized function $\psi(T)$, as a function of $\varphi(T)$, and the corresponding rate function were determined.}
    \label{tabela}
\end{table}

The paper is organized as follows. In the next section, we present the definitions and main results. Section~\ref{proof} proves the main theorems, and the final section contains some auxiliary results needed for the proofs of these theorems.

\section{Definitions and main results}\label{def}

In the proof of LDP, we will use the standard implication:
$$
\text{LLDP and ET} \Rightarrow \text{LDP},
$$
where LLDP is the local large deviation principle,
and ET is exponential tightness, see, for example, \cite[Lemma 4.1.23]{DZ}.

Recall the main definitions.
\begin{definition}
	A family of random variables $\xi_T(1)$ satisfies local large deviation principle (LLDP)  in  $\mathbb{R}$ with a rate function
	$I = I(x): \mathbb{R} \rightarrow [0,\infty]$ and the normalizing function
	$\psi(T)$, such that $\lim\limits_{T\rightarrow\infty}\psi(T) = \infty$,
	if the following equality holds for any  $x \in  \mathbb{R}$,
	$$
	\lim_{\varepsilon\rightarrow 0}\limsup_{T\rightarrow \infty}\frac{1}{\psi(T)}
	\ln\mathbf{P}(\xi_T(1)\in U_\varepsilon(x))
	=\lim_{\varepsilon\rightarrow 0}\liminf_{T\rightarrow \infty}\frac{1}{\psi(T)}
	\ln\mathbf{P}(\xi_T(1)\in U_\varepsilon(x))=-I(x),
	$$
	where
	$
	U_\varepsilon(x):=\{y\in \mathbb{R}: \ |x-y|<\varepsilon\}.
	$
\end{definition}

\begin{definition}
	A family of random variables $\xi_T(1)$ is
	exponentially tight  on  $\mathbb{R}$  if, for any $c < \infty$, there exists a compact set $K_c \subset \mathbb{R}$ such that
	$$
	\limsup_{T \rightarrow \infty} \frac{1}{\psi(T)} \ln \mathbf{P}(\xi_T(1) \not\in K_c  ) <-c.
	$$
\end{definition}

We denote the closure and interior of the set $B$ by  $[B]$ and $(B)$, respectively.

\begin{definition}
	A family of random variables $\xi_T(1)$ satisfies large deviation principle (LDP)  on  $\mathbb{R}$ with a rate function
	$I = I(x): \mathbb{R} \rightarrow [0,\infty]$ and normalizing function
	$\psi(T)$ such that $\lim_{T\rightarrow\infty}\psi(T) = \infty$,
	if for any $c \geq 0 $ \ the set $\{ x \in \mathbb{R}: I(x) \leq c \}$  is a compact set
	and, for any set $B \in \mathfrak{B}(\mathbb{R})$ the following inequalities hold:
	$$
	\limsup_{T \rightarrow \infty} \frac{1}{\psi(T)} \ln \mathbf{P}(\xi_T(1) \in B ) \leq - I([B]),
	$$
	$$
	\liminf_{T \rightarrow \infty} \frac{1}{\psi(T)} \ln \mathbf{P}(\xi_T(1) \in B )\geq -I((B)),
	$$
	where $\mathfrak{B}(\mathbb{R})$ is the Borel $\sigma$-algebra on $\mathbb{R}$, $I(B) = \inf\limits_{x \in B} I(x)$,
	$I(\emptyset) = \infty$.
\end{definition}

Further, we will use the following notations:
$\overline{B}$ is the complement of the set $B$;
$\mathbf{I}(B)$ is the indicator of the set $B$;
$\lfloor a \rfloor$ is integer part of the number $a$.

We study three regimes. The sublinear regime we define by the following two conditions:
\begin{equation} \label{1.2}
\lim\limits_{T\rightarrow \infty}\frac{\varphi(T)}{T}=0,
\end{equation}
and there exists $a\in (0,1)$ such that
\begin{equation} \label{1.3}
\ \ \ \lim\limits_{T\rightarrow \infty}\frac{\varphi(T)}{T^a}=\infty.
\end{equation}
The large deviation principle, in this case, we can refer to the moderate large deviations principle (MDP). The superlinear regime we define by the following condition,
\begin{equation} \label{1.4}
\lim\limits_{T\rightarrow \infty}\frac{\varphi(T)}{T}=\infty.
\end{equation}
The large deviation principle in this regime can refer to the super large deviation principle (SLDP). Finally, the linear regime can be associated with the classical large deviation principle. The liner regime we define by the following condition: there exists $k>0$, such that
\begin{equation} \label{15.11.2024-1}
\lim\limits_{T\rightarrow \infty}\frac{\varphi(T)}{T}=k.
\end{equation}

Now, we state the main theorems.

\begin{theorem}[\!\!{\cite[Theorem 1]{LRL}}] \label{t2.1} Let the condition (\ref{1.3}) holds, then for any $\varepsilon>0$
the following inequality holds
$$
\mathbf{P}\bigg(\lim\limits_{T\rightarrow\infty}\sup\limits_{t\in[0,1]}\xi_T(t)>\varepsilon\bigg)=0.
$$
\end{theorem}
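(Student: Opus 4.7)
Since $\xi(s)=\eta(\nu(s))$ and $\nu$ is nondecreasing, one has
\[
\sup_{t\in[0,1]}\xi_T(t)\;=\;\frac{1}{\varphi(T)}\,\max_{0\le k\le\nu(T)}\eta(k).
\]
A Chernoff bound for the Poisson random variable $\nu(T)$ yields $\mathbf{P}(\nu(T)>2\alpha T)\le e^{-cT}$ for some $c>0$, so up to a super--polynomially small error it suffices to control $\max_{k\le 2\alpha T}\eta(k)$.

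The key ingredient is a uniform geometric tail for the embedded chain: there exist $C<\infty$ and $\tilde\rho\in(0,1)$ such that
\[
\sup_{k\ge 0}\mathbf{P}\bigl(\eta(k)\ge m\bigr)\;\le\;C\tilde\rho^{m},\qquad m\ge 0.
\]
I would prove this by a Foster--Lyapunov argument with $V(i):=\rho^{-\beta i}$, where $\rho:=\lambda/(\lambda+\mu)$ and $\beta\in(0,1)$. A direct calculation from the transition probabilities gives, for $i\ge 1$,
\[
\mathbf{E}\bigl[V(\eta(k+1))\mid\eta(k)=i\bigr]\;=\;\Bigl(p\rho^{-\beta}+\frac{q}{i(\rho^{-\beta}-1)}\Bigr)V(i)\;-\;\frac{q}{i(\rho^{-\beta}-1)},
\]
with $p=\rho$ and $q=1-\rho$. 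As $i\to\infty$ the bracketed coefficient tends to $p\rho^{-\beta}=\rho^{1-\beta}<1$, so a geometric drift inequality of the form $\mathbf{E}[V(\eta(k+1))\mid\eta(k)=i]\le\gamma V(i)+K\,\mathbf{I}(i\le M)$ holds for some $\gamma<1$, $K<\infty$ and $M\in\mathbb{N}$. Iterating gives $\sup_k\mathbf{E} V(\eta(k))<\infty$, and Markov's inequality with $V(m)=\rho^{-\beta m}$ yields the required tail with $\tilde\rho=\rho^{\beta}$.

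A union bound now gives
\[
\mathbf{P}\Bigl(\sup_{t\in[0,1]}\xi_T(t)>\varepsilon\Bigr)\;\le\;e^{-cT}+2\alpha CT\,\tilde\rho^{\varepsilon\varphi(T)}.
\]
By (\ref{1.3}) one has $\varphi(T)\ge T^{a}$ for $T$ large, so the right-hand side is summable along $T_n=n$. The Borel--Cantelli lemma together with the monotonicity of $T\mapsto\max_{k\le\nu(T)}\eta(k)$ interpolates the bound to all $T\to\infty$ and gives the claimed almost-sure convergence. The main obstacle is the uniform tail bound for $\eta$: because catastrophes send the chain to a uniform state in $\{0,\dots,i-1\}$ rather than resetting to $0$, one needs a Lyapunov function sensitive to this averaging. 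The choice $V(i)=\rho^{-\beta i}$ with $\beta<1$ is designed precisely so that the $+1$ increment produces a factor $\rho^{-\beta}<\rho^{-1}=1/p$, while the uniform--jump contribution is only $O(V(i)/i)$ and therefore does not destroy the drift.
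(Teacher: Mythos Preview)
The paper does not actually prove Theorem~\ref{t2.1}; it is quoted from \cite{LRL} (note the citation in the theorem header), and no argument for it appears in Section~\ref{proof}. So there is no proof in the present paper to compare your attempt against.

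On its own merits your plan is correct. The identity $\sup_{t\in[0,1]}\xi_T(t)=\varphi(T)^{-1}\max_{0\le k\le\nu(T)}\eta(k)$ holds because $\nu(Tt)$ sweeps through every integer in $\{0,\dots,\nu(T)\}$ as $t$ ranges over $[0,1]$. Your Foster--Lyapunov computation with $V(i)=\rho^{-\beta i}$, $\rho=\lambda/(\lambda+\mu)$, $\beta\in(0,1)$, is accurate: the coefficient of $V(i)$ in $\mathbf{E}[V(\eta(k+1))\mid\eta(k)=i]$ is exactly $\rho^{1-\beta}+\frac{1-\rho}{i(\rho^{-\beta}-1)}$, which tends to $\rho^{1-\beta}<1$ as $i\to\infty$, so the geometric drift condition holds outside a finite set and $\sup_k\mathbf{E}V(\eta(k))<\infty$ follows (the chain starts at $0$). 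The union bound and summability along integers under~(\ref{1.3}) are then routine.

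The one step that needs slightly more care is the passage from $T_n=n$ to continuous $T$. Monotonicity of $M(T):=\max_{k\le\nu(T)}\eta(k)$ gives $M(T)\le M(\lceil T\rceil)$, but you still have to bound $\varphi(T)$ from below for non-integer $T$, and the paper assumes no regularity on $\varphi$ beyond~(\ref{1.3}); in particular $\varphi(T)$ need not be comparable to $\varphi(\lceil T\rceil)$. The clean fix is to observe that your uniform tail actually yields $M(n)\le C_1\log n$ eventually a.s.\ (apply Borel--Cantelli with the choice $m=C_1\log n$ for $C_1$ large enough that $n\cdot\tilde\rho^{\,C_1\log n}$ is summable). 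Then
\[
\frac{M(T)}{\varphi(T)}\;\le\;\frac{M(\lceil T\rceil)}{\varphi(T)}\;\le\;\frac{C_1\log(T+1)}{\varphi(T)}\;\longrightarrow\;0
\]
directly from $\varphi(T)/T^a\to\infty$, with no need to compare values of $\varphi$ at nearby arguments.
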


Note that Theorem~\ref{t2.1} essentially states the law of large numbers. Specifically, it implies that under condition (\ref{1.3}), the measure of trajectories concentrates near 0 within the interval $[0,1]$.

\begin{theorem}[LDP] \label{t1.1}  Let the scaling $\varphi(T)$ satisfies the condition (\ref{15.11.2024-1}). %$\lim\limits_{T\rightarrow \infty}\frac{\varphi(T)}{T}=k>0$,
Then the family of random variables $\xi_T(1)$ satisfies
	LDP with normalization function
	$\psi(T)=\varphi(T)$ and with rate function
	\begin{equation}\label{22.11.5}
	J_k(x)= \left\{
	\begin{array}{lcl}
		\ \infty,  \text{ if }\ x\in (-\infty,0),\\
		x\ln\left(\frac{\lambda+\mu}{\lambda}\right), \text{ if }\ x\in \big[0,\frac{\alpha}{k}\big),\\
		x\ln\left(\frac{kx(\lambda+\mu)}{\alpha\lambda}\right)-x+\frac{\alpha}{k}, \text{ if }\ x\in \big[\frac{\alpha}{k},\infty\big).\\
	\end{array}
	\right.
	\end{equation}
\end{theorem}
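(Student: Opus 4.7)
The plan is to deduce Theorem \ref{t1.1} from the criterion stated just before Definition~1: LLDP plus ET implies LDP. I shall therefore verify separately the local large deviation principle with rate $J_k$, and exponential tightness, both under the normalization $\psi(T) = \varphi(T)$.

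Exponential tightness comes from the pointwise inequality $\eta(n) \leq n$ (the chain grows by at most one unit per step), which yields $\xi(T) \leq \nu(T)$ and hence $\mathbf{P}(\xi_T(1) > M) \leq \mathbf{P}(\nu(T) > M\varphi(T))$. Cramér's estimate for Poisson variables together with $\varphi(T) \sim kT$ gives
$$
\limsup_{T\to\infty}\frac{1}{\varphi(T)}\ln\mathbf{P}(\nu(T) > M\varphi(T)) = -\frac{1}{k}I_\nu(Mk),\qquad I_\nu(y) = y\ln\frac{y}{\alpha} - y + \alpha,
$$
which diverges as $M\to\infty$; combined with $\xi_T(1) \geq 0$, the compacts $K_c = [0,M_c]$ suffice. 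For the LLDP at $x < 0$ the event is eventually empty, so the rate is $+\infty = J_k(x)$. At $x = 0$, Theorem~\ref{t2.1} applies (since (\ref{15.11.2024-1}) implies (\ref{1.3})) and yields $\mathbf{P}(\xi_T(1) \in U_\varepsilon(0)) \to 1$, so the rate equals $0 = J_k(0)$.

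The heart of the proof is the LLDP for $x > 0$, which rests on a sharp estimate
$$
\lim_{\substack{n,m\to\infty\\ n\geq m}} \frac{1}{m}\ln \mathbf{P}(\eta(n) = m) = \ln p, \qquad p := \frac{\lambda}{\lambda+\mu}.
$$
The lower bound is immediate from the scenario "the last $m$ Bernoulli coin-flips are up-steps, preceded by a catastrophe that zeroes the chain", contributing at least $p^m q/(m+1)$. The upper bound requires a uniform polynomial-in-state tail $\mathbf{P}(\eta(j) \geq \ell) \leq \mathrm{poly}(j)\,p^\ell$, obtained by union-bounding over the last catastrophe time and exploiting that a catastrophe from state $i$ lands uniformly on $\{0,\ldots,i-1\}$, iterating to control the post-catastrophe state. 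Given this estimate, independence of $\nu$ and $\eta$ provides
$$
\mathbf{P}\bigl(\xi_T(1) \in U_\varepsilon(x)\bigr) \;=\; \sum_{n \geq \lfloor(x-\varepsilon)\varphi(T)\rfloor} \mathbf{P}(\nu(T) = n)\,\mathbf{P}\bigl(\eta(n) \in \varphi(T)U_\varepsilon(x)\bigr),
$$
and Laplace-type asymptotics in the exponent, using $\varphi(T)\sim kT$, deliver
$$
\lim_{\varepsilon\to 0}\lim_{T\to\infty} \frac{-1}{\varphi(T)}\ln\mathbf{P}\bigl(\xi_T(1) \in U_\varepsilon(x)\bigr) \;=\; x\ln\frac{\lambda+\mu}{\lambda} + \inf_{y \geq kx} \frac{I_\nu(y)}{k}.
$$
The infimum equals $0$ when $kx \leq \alpha$ (attained at $y = \alpha$) and equals $\tfrac{1}{k}(kx\ln(kx/\alpha) - kx + \alpha)$ otherwise (attained at $y = kx$), and elementary algebra identifies the result with the two branches of $J_k$ in (\ref{22.11.5}).

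The main obstacle is the uniform-in-$n$ upper bound on $\mathbf{P}(\eta(n) \geq m)$: although the dominant pathway is clearly a long terminal up-run, one must rigorously exclude the contribution of paths whose post-catastrophe state is already a non-negligible fraction of $m$. This will presumably be handled by an inductive tail estimate, deferred to the auxiliary section of the paper; once it is in hand, the remainder is a routine Laplace-principle bookkeeping.
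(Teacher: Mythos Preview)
Your strategy---decomposing through the subordination $\xi(T)=\eta(\nu(T))$, conditioning on $\nu(T)=n$, and then combining a tail estimate for the embedded chain $\eta$ with the Poisson LDP via a Laplace sum---is \emph{not} the route taken in the paper. The paper simply invokes \cite{LYL} and remarks that the argument there carries over verbatim when $T$ is replaced by $\varphi(T)$; judging by the proof of Theorem~\ref{t2.2}, that argument works in continuous time through the representation \eqref{4.1}, splitting $\xi(t)=\nu_1(t)-\sum\zeta_k(\xi(\tau_k-))$, and uses pathwise decompositions of the type $\mathbf{P}_1,\mathbf{P}_2,\mathbf{P}_{1r},\ldots$ together with the coupling Lemma~\ref{l5.1} and the combinatorial Lemma~\ref{l5.2}. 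Nowhere does the paper isolate or prove a tail estimate for the discrete chain $\eta$.

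This is precisely where your proposal has a genuine gap. You defer the bound $\mathbf{P}(\eta(n)\ge m)\le \mathrm{poly}(n)\,p^m$ to ``the auxiliary section of the paper,'' but no such statement appears there (the auxiliary lemmas concern Poisson tails, sums of uniforms, and the continuous-time coupling only). Worse, this bound is exactly the hard half of the theorem: once it is known, your Laplace bookkeeping is indeed routine, but establishing it is essentially equivalent in difficulty to the upper bound in \cite{LYL}. Your suggested mechanism (``union-bounding over the last catastrophe time \ldots iterating'') runs into the very obstacle you name: after a catastrophe from state $i$ the chain lands uniformly on $\{0,\ldots,i-1\}$, so the post-catastrophe state can itself be of order $m$, and a naive recursion of the form $h(n+1,m)\le h(n,m-1)+q\,h(n,m)$ for $h(n,m)=p^{-m}\mathbf{P}(\eta(n)\ge m)$ only yields exponential growth in $n$. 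Getting the polynomial factor genuinely requires exploiting the $1/i$ averaging, which is what the paper's machinery (via \eqref{4.1} and Lemma~\ref{l5.2}) does in a different guise. A smaller but related issue: the displayed limit $\lim_{n,m\to\infty,\,n\ge m}\tfrac{1}{m}\ln\mathbf{P}(\eta(n)=m)=\ln p$ is false as written, since for example $\mathbf{P}(\eta(m+1)=m)=0$ for every $m\ge 1$; you need the tail version $\mathbf{P}(\eta(n)\ge m)$ throughout. Your variational identification $x\ln\frac{\lambda+\mu}{\lambda}+\tfrac{1}{k}\inf_{y\ge kx}I_\nu(y)=J_k(x)$ and the exponential-tightness argument are correct.
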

\begin{theorem}[MDP] \label{t2.2}  Let the conditions (\ref{1.2}) and (\ref{1.3}) hold,
then the family of random variables $\xi_T(1)$ satisfies
large deviation principle with normalized function
$\psi(T)=\varphi(T)$ and the rate function
$$
I_1(x)= \left\{
           \begin{array}{lcl}
                              \ \infty,  \text{ if }\ x\in (-\infty,0),\\
                              x\ln\left(\frac{\lambda+\mu}{\lambda}\right), \text{ if }\ x\in [0,\infty).\\
                              \end{array}
                              \right.
$$
\end{theorem}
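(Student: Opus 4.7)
The plan is to apply the implication LLDP and ET $\Rightarrow$ LDP already invoked in the paper. Set $p := \lambda/(\lambda+\mu)$. Both LLDP and ET will rest on a single key estimate of the form
$$
\mathbf{P}\Bigl(\max_{k \leq n} \eta(k) \geq m\Bigr) \leq C_n \cdot p^{\,m-1}, \qquad \text{with } \frac{\ln C_n}{\varphi(T)} \to 0,
$$
when $n$ is of order $T$. The idea is that the chain $\eta$ regenerates upon visiting $0$, and starting from state $1$ the probability of reaching $m$ before returning to $0$ is of order $p^{m-1}$; a union bound over the at most $n$ excursions in $n$ steps then yields the displayed inequality. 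A careful hitting-probability argument (which I expect to appear in the auxiliary section) is needed because trajectories can reach $m$ indirectly via catastrophes that reset only slightly, so the naive count of pure-growth streaks misses contributions; this is the main technical obstacle.

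Granted the key estimate, exponential tightness follows quickly. Combining with a Chernoff bound $\mathbf{P}(\nu(T) > 2\alpha T) \leq e^{-cT}$,
$$
\mathbf{P}(\xi_T(1) > L) \leq e^{-cT} + C_{2\alpha T} \cdot p^{\,L\varphi(T) - 1}.
$$
Dividing the log by $\varphi(T)$, the $e^{-cT}$ term is negligible and $\ln C_{2\alpha T}/\varphi(T) \to 0$ thanks to condition (\ref{1.3}), which forces $\varphi(T) \geq T^a$ and hence dominates $\ln T$. The remaining contribution converges to $-L\ln((\lambda+\mu)/\lambda)$, which is $<-c$ for $L$ large, yielding ET on $[0,L]$. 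The LLDP upper bound at $x>0$ is the same calculation applied to $\mathbf{P}(\xi_T(1) \in U_\varepsilon(x)) \leq \mathbf{P}(\xi(T) \geq (x-\varepsilon)\varphi(T))$ and then sending $\varepsilon \downarrow 0$.

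The cases $x<0$ (rate $\infty$, since $\xi \geq 0$) and $x = 0$ (rate $0$, by Theorem~\ref{t2.1}) are immediate. For the matching LLDP lower bound at $x>0$, the cheapest trajectory is to have $\eta$ at $0$ at step $\nu(T) - m$, with $m := \lfloor x\varphi(T) \rfloor$, and then perform the forced step $0 \to 1$ followed by $m-1$ consecutive growth jumps, which brings $\eta$ exactly to $m$ at step $\nu(T)$. This pattern has conditional probability $p^{m-1}$. To complete the argument, I need $\mathbf{P}(\eta(k) = 0) \geq \pi_0/2 > 0$ for all $k$ sufficiently large (positive recurrence and convergence to the stationary distribution of $\eta$) together with concentration $\mathbf{P}(\nu(T) \approx \alpha T) \to 1$; both are standard, and condition (\ref{1.2}) guarantees that the relevant step $\nu(T) - m \sim \alpha T - x\varphi(T) \to \infty$. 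Putting everything together gives $\mathbf{P}(\xi_T(1) \in U_\varepsilon(x)) \geq c\, p^{m-1}$, matching the upper bound and yielding $-I_1(x)$ as $\varepsilon \downarrow 0$.
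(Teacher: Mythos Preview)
Your overall strategy (LLDP + ET $\Rightarrow$ LDP, with the lower bound coming from a catastrophe-free run at the end) is sound and matches the paper in spirit, but the heart of your upper bound rests on an estimate you do not prove and which is \emph{not} in the paper's auxiliary section. Specifically, you need
\[
q_m \;=\; \mathbf{P}\bigl(\text{hit }m\text{ before }0 \,\big|\, \eta=1\bigr) \;\le\; e^{o(m)}\,p^{\,m-1},
\]
and you correctly identify that partial catastrophes make this nontrivial. The auxiliary results the paper actually supplies are a Poisson tail bound (Lemma~\ref{l5.4}), a bound on sums of uniform variables (Lemma~\ref{l5.2}), an LDP for Poisson increments (Lemma~\ref{l5.3}), and a coupling showing $|\xi_x(t)-\xi_y(t)|\le|x-y|$ (Lemma~\ref{l5.1}); none of these yields the excursion estimate. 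A heuristic analysis of the recursion $h(i)=p\,h(i{+}1)+\frac{1-p}{i}\sum_{j<i}h(j)$ suggests $q_m\sim m^{p}p^{m-1}$, which would suffice, but turning this into a proof is a separate piece of work that your proposal leaves open.

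The paper sidesteps this obstacle with a completely different, continuous-time argument based on the representation $\xi(t)=\nu_1(t)-\sum_{k\le\nu_2(t)}\zeta_k(\xi(\tau_k-))$. It splits $\mathbf{P}(\xi_T(1)\ge x)$ according to whether $\inf_{t\in[1-c\varphi(T)/T,\,1]}\xi_T(t)>\delta$. On the event that the infimum stays above $\delta$, a pigeonhole over windows of length $c\varphi(T)/T$ finds one in which the process rose from $\le\delta$ and then stayed above $\delta$; in that window there are $\ge v\varphi(T)$ catastrophe times (Poisson lower tail), each removing a uniform amount from a state $>\delta\varphi(T)$, yet the total removed is bounded---Lemma~\ref{l5.2} makes this exponentially unlikely in $\varphi(T)\ln\varphi(T)$. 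On the complementary event the process was $\le 2\delta$ at some recent time $t_r$, and the coupling Lemma~\ref{l5.1} reduces the problem to $\mathbf{P}\bigl(\xi(c\varphi(T))\ge(x-2\delta)\varphi(T)\bigr)$, to which the already-proved \emph{linear} LDP (Theorem~\ref{t1.1}) applies on the time scale $c\varphi(T)$, giving exactly the rate $(x-2\delta)\ln\frac{\lambda+\mu}{\lambda}$. Condition~(\ref{1.3}) enters only to kill the polynomial-in-$T$ prefactors. For the lower bound the paper uses the event $\{\nu_1(T)-\nu_1(T-c\varphi(T))\ge x\varphi(T),\ \nu_2(T)-\nu_2(T-c\varphi(T))=0\}$ directly, which implies $\xi(T)\ge x\varphi(T)$ regardless of the earlier state and thus avoids any appeal to positive recurrence of $\eta$.
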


\begin{theorem}[SLDP] \label{t2.3}  Let the condition (\ref{1.4}) holds,
then the family of random variables $\xi_T(1)$ satisfies large deviation principle with normalized function
$\psi(T)=\varphi(T)\ln\frac{\varphi(T)}{T}$ and the rate function
$$
I_2(x)= \left\{
           \begin{array}{lcl}
                              \ \infty,  \text{ if }\ x\in (-\infty,0),\\
                              x, \text{ if }\ x\in [0,\infty).\\
                              \end{array}
                              \right.
$$
\end{theorem}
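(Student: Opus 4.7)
The proof rests on one simple domination: since $\eta$ starts at $0$ and moves up by at most one per step, $\eta(k)\le k$ for every $k$, so $\xi(T)=\eta(\nu(T))\le\nu(T)$. Consequently, the LDP upper bound and exponential tightness reduce to sharp Chernoff-type tail estimates for the Poisson variable $\nu(T)$ at the scale $\psi(T)=\varphi(T)\ln(\varphi(T)/T)$, while the matching lower bound will come from a single favourable trajectory that forces $\eta$ to climb from $0$ to $\lceil x\varphi(T)\rceil$ in as many consecutive up-steps. The whole argument then closes by the standard implication LLDP and ET $\Rightarrow$ LDP (\cite[Lemma 4.1.23]{DZ}).

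For the LLDP upper bound at $x>0$ and $0<\varepsilon<x$, I would use
\[
\mathbf{P}(\xi_T(1)\in U_\varepsilon(x))\le \mathbf{P}(\nu(T)>(x-\varepsilon)\varphi(T))
\]
and apply the Chernoff estimate with optimal $\theta=\ln(n/(\alpha T))$, namely $\mathbf{P}(\nu(T)\ge n)\le\exp(-n\ln(n/(\alpha T))+n-\alpha T)$ valid for $n>\alpha T$. With $n=\lceil(x-\varepsilon)\varphi(T)\rceil$, condition (\ref{1.4}) guarantees $n>\alpha T$ for all large $T$; then using $T/\varphi(T)\to 0$, $\ln(\varphi(T)/T)\to\infty$, and $\ln(n/(\alpha T))/\ln(\varphi(T)/T)\to 1$, dividing the logarithm by $\psi(T)$ leaves exactly $-(x-\varepsilon)$ in the limit. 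Sending $\varepsilon\downarrow 0$ gives the desired $-x$. The case $x=0$ is covered by Theorem~\ref{t2.1} (applicable because (\ref{1.4}) implies (\ref{1.3})), yielding $\mathbf{P}(\xi_T(1)\in U_\varepsilon(0))\to 1$ and hence the contribution $0=I_2(0)$; for $x<0$ the neighbourhood is disjoint from $[0,\infty)$ when $\varepsilon$ is small, matching $I_2(x)=\infty$.

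For the matching lower bound at $x>0$, set $n=\lceil x\varphi(T)\rceil$ and consider the event $\{\nu(T)=n\}\cap\{$every transition of $\eta$ up to step $n$ is an up-step$\}$. On this event $\eta(n)=n$, so $\xi_T(1)=n/\varphi(T)\in U_\varepsilon(x)$ for all sufficiently large $T$. Independence of $\nu$ from $\eta$ together with the transition probabilities of the chain give
\[
\mathbf{P}(\xi_T(1)\in U_\varepsilon(x))\ge \mathbf{P}(\nu(T)=n)\,p^{n-1},\qquad p:=\frac{\lambda}{\lambda+\mu},
\]
where the exponent is $n-1$ rather than $n$ because the first transition from $0$ to $1$ has probability $1$. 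Stirling's formula yields $\ln\mathbf{P}(\nu(T)=n)=-\alpha T+n-n\ln(n/(\alpha T))+O(\ln T)$; after division by $\psi(T)$ only the term $-n\ln(n/(\alpha T))$ survives and converges to $-x$, while $(n-1)\ln p/\psi(T)\to 0$ thanks to $\ln(\varphi(T)/T)\to\infty$. Hence $\liminf\psi(T)^{-1}\ln\mathbf{P}(\xi_T(1)\in U_\varepsilon(x))\ge -x$, completing the LLDP.

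Exponential tightness with $K_c:=[0,c+1]$ is immediate from the same Chernoff bound: $\mathbf{P}(\xi_T(1)\notin K_c)\le\mathbf{P}(\nu(T)>(c+1)\varphi(T))$ decays like $\exp(-(c+1)\psi(T)(1+o(1)))$, and the sublevel sets $\{I_2\le c\}=[0,c]$ are compact, so \cite[Lemma 4.1.23]{DZ} closes the proof. The main technical care lies in bookkeeping the three scales $\alpha T\ll\varphi(T)\ll\psi(T)$; it is precisely because $\ln(\varphi(T)/T)\to\infty$ in the superlinear regime that the parameters $\lambda,\mu,\alpha$ drop out of the rate function, leaving the universal linear shape $I_2(x)=x$.
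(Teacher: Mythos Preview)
Your argument is correct and follows essentially the same route as the paper: a Chernoff/Markov bound on a dominating Poisson variable for the upper bound, a single favourable trajectory with no catastrophes for the lower bound (your event $\{\nu(T)=n,\ \text{all up-steps}\}$ is exactly the paper's $\{\nu_1(T)\ge x\varphi(T),\ \nu_2(T)=0\}$ in the two-Poisson representation), Stirling for the asymptotics, and LLDP~$+$~ET $\Rightarrow$ LDP to conclude. The only cosmetic difference is that you work directly with the embedded-chain representation $\xi(T)=\eta(\nu(T))\le\nu(T)$, whereas the paper uses the decomposition $\xi(t)=\nu_1(t)-\sum\zeta_k$ and the domination $\xi(T)\le\nu_1(T)$; your choice is arguably cleaner since the inequality $\eta(k)\le k$ is immediate, but both yield the same exponent after division by $\psi(T)$.
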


\section{Proof of main results}\label{proof}

\subsection{Proof of Theorem~\ref{t1.1}}

The proof from \cite{LYL} can be adapted by replacing $T$ with $\varphi(T)$. Therefore, we omit the proof here.

\subsection{Proof of Theorem~\ref{t2.2}}

According to the definition (\ref{defxi}), the Poisson process $\nu$ contains two parts: the regular growth of the population and the flow of catastrophic events. The process's classic representation, as a difference between regular and catastrophic components, will be more useful for the proof. Represent the random process
$\xi(t)$ in the following way (see \cite{LYL})
\begin{equation} \label{4.1}
\xi(t)=\nu_1(t)-\sum\limits_{k=0}^{\nu_2(t)}\zeta_k(\xi(\tau_k-)),
\end{equation}
where
\begin{itemize}
	\item $\nu_1(t)$, $\nu_2(t)$ are independent Poisson processes with rates $\frac{\alpha\lambda}{\lambda+\mu}$ and $\frac{\alpha\mu}{\lambda+\mu}$ correspondingly; $\nu_1$ corresponds to the regular increasing of a population, and $\nu_2$ corresponds to the flow of catastrophic events;
	\item $\tau_0=0$, and $\tau_1,\dots,\tau_k,\dots$ are the time jumps  of $\nu_2(t)$ (instances of catastrophic events);
	\item the random variables $\zeta_k(m)$, where $k \in \mathbb{Z}^+$ and $m \in \mathbb{Z}^+$, represent catastrophe sizes; these variables are mutually independent, independent of $\nu_1(t)$ and $\nu_2(t)$, and their distribution is uniform; specifically, for a population of size $m$, the catastrophe size follows
	$\mathbf{P}(\zeta_k(m)=r)=1/m$, $1\leq r\leq m$, $k\in \mathbb{N}$, $m\in \mathbb{N}$;
	\item
	we define $\zeta_k(0) = -1$ for $k \in \mathbb{N}$, which induces a reflection at $0$ occurring at rate $\alpha$; for convenience we define $\zeta_0(m)=0$, $m\in \mathbb{Z}^+$.
\end{itemize}

The main idea of the proof is to demonstrate that for any $x \geq 0$ and as $T \to \infty$, the probability
$$\mathbf{P}(\xi_T(1) \geq x)$$
is logarithmically equivalent to
$$\mathbf{P}\bigl( \nu_1(T) - \nu_1(T - c\varphi(T)) \geq x\varphi(T),\ \nu_2(T) - \nu_2(T - c\varphi(T)) = 0 \bigr),
$$
where $c = \frac{x}{\alpha}$. Roughly speaking, this means that if $\xi_T(1) \geq x$, then the trajectory of the process $\xi_T(t)$ must stay close to zero until time $t = 1 - \frac{c\varphi(T)}{T}$. Starting from this time point, at least $x\varphi(T)$ jumps must occur in the process $\nu_1(Tt)$, while no jumps (or $o(\varphi(T))$ jumps) happen in the process $\nu_2(Tt)$.

Unfortunately, the authors lack a simple proof of this fact, which is critical for establishing the upper bound (\ref{02.03.7}). The corresponding lower bound, on the other hand, can be derived in a much more straightforward manner (see formulas (\ref{new11.11.1}) and (\ref{23.11.24})).

\subsubsection*{Upper bound.}

Let us prove the upper bound,
\begin{equation}\label{02.03.7}
\limsup\limits_{T\rightarrow\infty}\frac{1}{\varphi(T)}\ln\mathbf{P}(\xi_T(1)\geq x)\leq
-I_1(x).
\end{equation}

Observe that Theorem~\ref{t2.1} establishes inequality (\ref{02.03.7}) for
$x=0$. Thus, it remains to prove this inequality for $x>0$. To this end, let us derive an upper bound for $\mathbf{P}(\xi_T(1)\geq x)$ when $x>0$. For any $\delta>0$, $c>0$, $n\in \mathbb{N}$ we have
\begin{equation}\label{23.11.18}
\begin{aligned}
&\mathbf{P}(\xi_T(1)\geq x) \\
&=\mathbf{P}\bigg(\xi_T(1)\geq x, \inf\limits_{t\in\big[1-\frac{c\varphi(T)}{T},1\big]}\xi_T(t)>\delta\bigg)
+\mathbf{P}\bigg(\xi_T(1)\geq x, \inf\limits_{t\in\big[1-\frac{c\varphi(T)}{T},1\big]}\xi_T(t)\leq\delta\bigg)
\\
&\leq
\mathbf{P}\bigg(\inf\limits_{t\in\big[1-\frac{c\varphi(T)}{T},1\big]}\xi_T(t)>\delta\bigg)
+\sum\limits_{r=1}^n\mathbf{P}\bigg(\xi_T(1)\geq x, \inf\limits_{t\in[t_{r-1},t_{r}]}\xi_T(t)\leq\delta\bigg)
=:\mathbf{P}_1+\mathbf{P}_2,
\end{aligned}
\end{equation}
where $t_r:=1-\frac{c\varphi(T)}{T}\left(1-\frac{r}{n}\right)$, $1\leq r \leq n$, $t_0:=1-\frac{c\varphi(T)}{T}$.

Let us establish an upper bound for $\mathbf{P}_1$. First, we divide the time interval $[0,1]$ into $u_T:=\lfloor \frac{T}{c\varphi(T)}\rfloor$ disjoint subintervals, defined by the increasing sequence of points $s_r$ for $0\le r \le u_T$, as follows:
$$
s_0:=0, \ s_r:=1-(u_T-r+1)\frac{c\varphi(T)}{T}, \  1\leq r \leq u_T,
\ s_{u_T+1}:=1.
$$
It is easy to see that
\begin{equation}\label{04.03.1}
\begin{aligned}
\left\{\inf\limits_{t\in\big[1-\frac{c\varphi(T)}{T},1\big]}\xi_T(t)>\delta\right\}
 & \subseteq \bigcup\limits_{r=0}^{u_T-1}
\left\{\inf\limits_{t\in[s_r,s_{r+1}]}
	\xi_T(t)\leq\delta,\inf\limits_{t\in [s_{r+1},1]}\xi_T(t)>\delta\right\}
\\
& \subseteq\bigcup\limits_{r=0}^{u_T-1}\left\{\inf\limits_{t\in[s_r,s_{r+1}]}
	\xi_T(t)\leq\delta,\inf\limits_{t\in[s_{r+1},s_{r+2}]}\xi_T(t)>\delta\right\}.
\end{aligned}
\end{equation}
The relations (\ref{04.03.1}) imply
\begin{equation}\label{A.1}
\begin{aligned}
	\mathbf{P}_1 &\leq \mathbf{P}\bigg(\bigcup\limits_{r=0}^{u_T-1}\left\{\inf\limits_{t\in[s_r,s_{r+1}]}
	\xi_T(t)\leq\delta,\inf\limits_{t\in[s_{r+1},s_{r+2}]}\xi_T(t)>\delta\right\}\bigg) \\
	& \leq\sum\limits_{r=0}^{u_T-1}\mathbf{P}\bigg(\inf\limits_{t\in[s_r,s_{r+1}]}
	\xi_T(t)\leq\delta,\inf\limits_{t\in[s_{r+1},s_{r+2}]}\xi_T(t)>\delta\bigg).
\end{aligned}
\end{equation}
For any $r=0, 1, \ldots, u_T-1$, let $A_r$ denote the event
$$
A_r:=\bigl\{\nu_1(Ts_{r+2})-\nu_1(Ts_{r})<2a\varphi(T)\bigr\},
$$
where $a>0$ is a constant to be specified below.
Using Markov's inequality, we obtain an upper bound for the probability of $\overline{A_r}$ as follows:
$$
\mathbf{P}\bigl( \nu_1(Ts_{r+2})-\nu_1(Ts_r)\geq 2a\varphi(T) \bigr) \le
\frac{\mathbf{E}e^{\nu_1(T(s_{r+2}-s_r))}}{e^{2a\varphi(T)}}\leq
e^{\frac{3c\alpha\lambda}{\lambda+\mu}(e-1)\varphi(T)-2a\varphi(T)}.
$$
Choosing $a>\frac{3c\alpha\lambda}{\lambda+\mu}(e-1)$ we obtain
$$
\mathbf{P}\bigl( \nu_1(Ts_{r+2})-\nu_1(Ts_r)\geq 2a\varphi(T) \bigr) \le
e^{-a\varphi(T)}.
$$
Returning to (\ref{A.1}), we have
\begin{equation}\label{A.2}
\begin{aligned}
	\mathbf{P}_1 & \leq \sum\limits_{r=0}^{u_T-1} \bigg( \mathbf{P} \bigg( \inf\limits_{t\in[s_r,s_{r+1}]}\xi_T(t)\leq\delta,
	\inf\limits_{t\in[s_{r+1},s_{r+2}]}\xi_T(t)>\delta,
	A_r\bigg) + \mathbf{P} \left( \overline{A_r}\right) \bigg) \\
	 & \leq \sum\limits_{r=0}^{u_T-1} \mathbf{P} \bigg( \inf\limits_{t\in[s_r,s_{r+1}]}\xi_T(t)\leq\delta,
\inf\limits_{t\in[s_{r+1},s_{r+2}]}\xi_T(t)>\delta,
A_r\bigg) + u_Te^{-a\varphi(T)} \\
	& \le u_T \max\limits_{0\leq r \leq u_T-1}\mathbf{P}\bigg(\inf\limits_{t\in[s_r,s_{r+1}]}\xi_T(t)\leq\delta,
	\inf\limits_{t\in[s_{r+1},s_{r+2}]}\xi_T(t)>\delta,
	A_r\bigg) + u_Te^{-a\varphi(T)} \\
	& =:u_T\max\limits_{0\leq r \leq u_T-1}\mathbf{P}_{1r}+ u_Te^{-a\varphi(T)}. \phantom{\bigg(}
\end{aligned}
\end{equation}

To estimate $\mathbf{P}_{1r}$, the number of catastrophic events and their associated disaster sizes should be considered in the calculations. For any $r=0, 1, \ldots, u_T-1$, let
$$
\begin{aligned}
B_r:= &\bigl\{\nu_2(Ts_{r+2})-\nu_2(Ts_{r+1})< v\varphi(T)\bigr\}, \\
C_r:= &\left\{	
\sum\limits_{k=0}^{\nu_2(T)}\zeta_k(\xi(\tau_k-))\mathbf{I}(\tau_k\in[s_{r+1},s_{r+2}]) < 2a\varphi(T) \right\}.
\end{aligned}
$$
Using Lemma~\ref{l5.4} with $u\in[0,1)$ we obtain the inequality
$$
\mathbf{P}(B_r)\leq\mathbf{P}\bigl(\nu_2\left(c\varphi(T)\right)< v\varphi(T)\bigr)\leq
\exp\left\{-\varphi(T)\left(\frac{\alpha\mu c}{\lambda+\mu}(1-u)+v\ln u\right)\right\}.
$$
By choosing $u=\frac{1}{3}$ and $v=\frac{\alpha\mu c}{6(\lambda+\mu)\ln 3}$, we obtain the following upper bound:
\begin{equation} \label{23.11.11}
\mathbf{P}(B_r)\leq e^{-\frac{\alpha\mu c}{2(\lambda+\mu)}\varphi(T)}.
\end{equation}
Observe that, due to (\ref{4.1}), it is impossible for $\xi_T$ to have an infimum less than $\delta$ in the interval $[s_{r}, s_{r+1}]$, and simultaneously to have at most $2a\varphi(T)$ upward jumps in the interval $[s_r, s_{r+2}]$ (the event $A_r$), while also having a total disaster size of at least $2a\varphi(T)$ in the interval $[s_{r+1}, s_{r+2}]$ (the event $\overline{C_r}$), and subsequently remaining greater than $\delta$ during the time interval $[s_{r+1}, s_{r+2}]$:
$$
\mathbf{P}\Bigl(\ \inf\limits_{t\in[s_r,s_{r+1}]}\xi_T(t)\leq\delta,\inf\limits_{t\in[s_{r+1},s_{r+2}]}\xi_T(t)>\delta,A_r, \overline{C_r}\Bigr)=0.
$$
Thus, using (\ref{23.11.11}), we obtain
\begin{equation} \label{new3.2}
\begin{aligned}
\mathbf{P}_{1r} & = \mathbf{P}\Bigl(\ \inf\limits_{t\in[s_r,s_{r+1}]}\xi_T(t)\leq\delta,\inf\limits_{t\in[s_{r+1},s_{r+2}]}\xi_T(t)>\delta,A_r, C_r\Bigr) \\
&\leq  \mathbf{P}\Bigl(\ \inf\limits_{t\in[s_{r+1},s_{r+2}]}\xi_T(t)>\delta,A_r,C_r\Bigr) \\
&\leq\mathbf{P}\Bigl(\ \inf\limits_{t\in[s_{r+1},s_{r+2}]}\xi_T(t)>\delta,
A_r, \overline{B_r},C_r\Bigr)
+\mathbf{P}(B_r) \\
&\leq \mathbf{P}\Bigl(\ \inf\limits_{t\in[s_{r+1},s_{r+2}]}\xi_T(t)>\delta,A_r, \overline{B_r},C_r\Bigr) +e^{-\frac{\alpha\mu c}{2(\lambda+\mu)}\varphi(T)} =:\mathbf{P}_{11r}+e^{-\frac{\alpha\mu c}{2(\lambda+\mu)}\varphi(T)}. \phantom{\bigg(}
\end{aligned}
\end{equation}

To find an upper bound for $\mathbf{P}_{11r}$, we derive an exponential bound by considering the combination of a large number of catastrophic events, $\overline{B_r}$, and an upper bound for the total catastrophic size, $C_r$. The event $A_r$ will be disregarded in this estimation.

To begin, let us first truncate the sizes of the catastrophes. Define the random variables
$$
\check{\zeta}_{k}(m):=\left\{
           \begin{array}{rcl}
                              \zeta_{k}(m), &\text{if}& \zeta_{k}(m)\leq \lfloor\delta \varphi(T)\rfloor,\\
                              \gamma_{k}, &\text{if}& \zeta_{k}(m)> \lfloor\delta \varphi(T)\rfloor,\\
                              \end{array}
                              \right.
$$
where the random variables $\gamma_{k}$, $k\in \mathbb{N}$, are mutually independent, follow a uniform distribution with $\mathbf{P}(\gamma_{k}=r)=\frac{r}{\lfloor\delta \varphi(T)\rfloor}$, for $1\leq r \leq \lfloor\delta \varphi(T)\rfloor$, and do not depend on $\zeta_{k}(m)$, $k\in \mathbb{N}$, or on $\nu_1(\cdot)$ and $\nu_2(\cdot)$. Note that for any $m \ge \lfloor\delta \varphi(T)\rfloor$
\begin{equation}\label{02.03.8}
\check{\zeta}_{k}(m) = \zeta_k(\lfloor \delta\varphi(T)\rfloor) \mbox{ in distribution}.
\end{equation}
Denote by $\kappa_1,\dots,\kappa_{\lfloor v\varphi(T)\rfloor}$
the first $\lfloor v\varphi(T)\rfloor$ jump instances of the process $\nu_2(Tt)$ during the time interval $[s_{r+1},\infty)$
(these time moments lie within the interval $[s_{r+1},s_{r+2}]$ under the event $\overline{B_r}$), and define $V$ to be the set
$$
\left\{h_1\in \mathbb{Z}^+,\dots,h_{\lfloor v\varphi(T)\rfloor}\in \mathbb{Z}^+:h_1+\dots+h_{\lfloor v\varphi(T)\rfloor}\leq 2a\varphi(T),
\max_{1 \leq l \leq \lfloor v\varphi(T)\rfloor}h_l\leq  \lfloor\delta \varphi(T)\rfloor\right\}.
$$
Using the notation introduced above, we obtain 
$$\mathbf{P}_{11r}  \leq
\mathbf{P}\bigg(\xi(\kappa_1-)>\delta\varphi(T),\dots,\xi(\kappa_{\lfloor v\varphi(T)\rfloor}-)>\delta\varphi(T),
\sum\limits_{k=1}^{\lfloor v\varphi(T)\rfloor}\zeta_k(\xi(\kappa_k-))\leq 2a\varphi(T), \overline{B_r}\bigg)$$
$$ \leq \mathbf{P}\bigg(\xi(\kappa_1-)>\delta\varphi(T),\dots,\xi(\kappa_{\lfloor v\varphi(T)\rfloor}-)>\delta\varphi(T),
\sum\limits_{k=1}^{\lfloor v\varphi(T)\rfloor}\check{\zeta}_k(\xi(\kappa_k-))\leq 2a\varphi(T)\bigg)$$
\begin{equation}\label{02.03.1}
\begin{aligned}
& =\sum\limits_{\substack{
\lfloor\delta \varphi(T)\rfloor<m_{1}<\infty\\
\dots\\
\lfloor\delta \varphi(T)\rfloor<m_{\lfloor v\varphi(T)\rfloor}<\infty
}}
\mathbf{P}\left(\bigcap\limits_{k=1}^{\lfloor v\varphi(T)\rfloor}\{\xi(\kappa_k-)=m_k\},
\sum\limits_{k=1}^{\lfloor v\varphi(T)\rfloor}\check{\zeta}_{k_k}(m_{k})\leq 2a\varphi(T)\right)
\\
& =\sum\limits_{(h_1,\dots,h_{\lfloor v\varphi(T)\rfloor})\in V}\sum\limits_{\substack{
\lfloor\delta \varphi(T)\rfloor<m_{1}<\infty\\
\dots\\
\lfloor\delta \varphi(T)\rfloor<m_{\lfloor v\varphi(T)\rfloor}<\infty
}}
\mathbf{P}\left(\bigcap\limits_{k=1}^{\lfloor v\varphi(T)\rfloor}\{\xi(\kappa_k-)=m_k\},
\bigcap\limits_{k=1}^{\lfloor v\varphi(T)\rfloor}\{\check{\zeta}_{k_k}(m_{k})= h_k\}\right)
\\
& =:\sum\limits_{(h_1,\dots,h_{\lfloor v\varphi(T)\rfloor})\in V}\sum\limits_{\substack{
\lfloor\delta \varphi(T)\rfloor<m_{1}<\infty\\
\dots\\
\lfloor\delta \varphi(T)\rfloor<m_{\lfloor v\varphi(T)\rfloor}<\infty
}}\mathbf{P}_{\lfloor cT\rfloor}(h_1,\dots,h_{\lfloor v\varphi(T)\rfloor}).
\end{aligned}
\end{equation}
Let us bound the $\mathbf{P}_{\lfloor cT\rfloor}(h_1,\dots,h_{\lfloor v\varphi(T)\rfloor})$.
Using the fact that $\check{\zeta}_{k_{\lfloor v\varphi(T)\rfloor}}(m_{\lfloor v\varphi(T)\rfloor})$ is independent of the events
$\bigcap\limits_{k=1}^{\lfloor v\varphi(T)\rfloor}\{\xi(\kappa_k-)=m_k\}$ and $
\bigcap\limits_{k=1}^{\lfloor v\varphi(T)\rfloor-1}\{\check{\zeta}_{k_k}(m_{k})= h_k\}$, we obtain
\begin{equation}\label{02.03.2}
\begin{aligned}
\mathbf{P}_{\lfloor cT\rfloor}&(h_1,\dots,h_{\lfloor v\varphi(T)\rfloor}) \\
=&
\mathbf{P}\left(\check{\zeta}_{k_{\lfloor v\varphi(T)\rfloor}}(m_{\lfloor v\varphi(T)\rfloor}))=h_{\lfloor v\varphi(T)\rfloor}\right)
\\ & \times
\mathbf{P}\left(\bigcap\limits_{k=1}^{\lfloor v\varphi(T)\rfloor}\{\xi(\kappa_k-)=m_k\},
\bigcap\limits_{k=1}^{\lfloor v\varphi(T)\rfloor-1}\{\check{\zeta}_{k_k}(m_{k})= h_k\}\right)
\\
=& \frac{1}{\lfloor \delta\varphi(T)\rfloor}\mathbf{P}\left(\bigcap\limits_{k=1}^{\lfloor v\varphi(T)\rfloor}\{\xi(\kappa_k-)=m_k\},
\bigcap\limits_{k=1}^{\lfloor v\varphi(T)\rfloor-1}\{\check{\zeta}_{k_k}(m_{k})= h_k\}\right).
\end{aligned}
\end{equation}
It is easy to see that
\begin{equation}\label{02.03.3}
\begin{aligned}
&\sum\limits_{\substack{
\lfloor\delta \varphi(T)\rfloor<m_{1}<\infty\\
\dots\\
\lfloor\delta \varphi(T)\rfloor<m_{\lfloor v\varphi(T)\rfloor}<\infty
}}
\mathbf{P}\left(\bigcap\limits_{k=1}^{\lfloor v\varphi(T)\rfloor}\{\xi(\kappa_k-)=m_k\},
\bigcap\limits_{k=1}^{\lfloor v\varphi(T)\rfloor-1}\{\check{\zeta}_{k_k}(m_{k})= h_k\}\right)
\\
&{\qquad}\leq\sum\limits_{\substack{
\lfloor\delta \varphi(T)\rfloor<m_{1}<\infty\\
\dots\\
\lfloor\delta \varphi(T)\rfloor<m_{\lfloor v\varphi(T)\rfloor-1}<\infty
}}
\mathbf{P}\left(\bigcap\limits_{k=1}^{\lfloor v\varphi(T)\rfloor-1}\{\xi(\kappa_k-)=m_k\},
\bigcap\limits_{k=1}^{\lfloor v\varphi(T)\rfloor-1}\{\check{\zeta}_{k_k}(m_{k})= h_k\}\right)
\\
&{\qquad} =\sum\limits_{\substack{
\lfloor\delta \varphi(T)\rfloor<m_{1}<\infty\\
\dots\\
\lfloor\delta \varphi(T)\rfloor<m_{\lfloor v\varphi(T)\rfloor-1}<\infty
}}\mathbf{P}_{\lfloor v\varphi(T)\rfloor-1}(h_1,\dots,h_{\lfloor v\varphi(T)\rfloor-1}).
\end{aligned}
\end{equation}
It follows recursively from (\ref{02.03.2}) and (\ref{02.03.3}) that
\begin{equation}\label{02.03.5}
\sum\limits_{\substack{
\lfloor\delta \varphi(T)\rfloor<m_{1}<\infty\\
\dots\\
\lfloor\delta \varphi(T)\rfloor<m_{\lfloor v\varphi(T)\rfloor}<\infty
}}\mathbf{P}_{\lfloor v\varphi(T)\rfloor}(h_1,\dots,h_{\lfloor v\varphi(T)\rfloor})
\leq\left(\frac{1}{\lfloor \delta\varphi(T)\rfloor}\right)^{\lfloor v\varphi(T)\rfloor}.
\end{equation}
Recalling the definition of the set $V$ above, and using (\ref{02.03.8}), (\ref{02.03.1}), (\ref{02.03.5}) and Lemma~\ref{l5.2}, we obtain
\begin{equation} \label{new3.3}
\begin{aligned}
\mathbf{P}_{11r}  &\leq\sum\limits_{(h_1,\dots,h_{\lfloor v\varphi(T)\rfloor})\in V}\left(\frac{1}{\lfloor \delta\varphi(T)\rfloor}\right)^{\lfloor v\varphi(T)\rfloor}
=\mathbf{P}\bigg(\sum\limits_{k=1}^{\lfloor v\varphi(T)\rfloor}\zeta_k(\lfloor \delta\varphi(T)\rfloor)\leq 2a\varphi(T)\bigg)
\\
&\leq
\bigg(\frac{1}{\lfloor \delta\varphi(T)\rfloor}\bigg)^{\lfloor v\varphi(T)\rfloor}\exp(2a\varphi(T)).
\end{aligned}
\end{equation}
For any $a>0$ and $v>0$, and for sufficiently large $T$, we have
\begin{equation} \label{23.11.7}
\bigg(\frac{1}{\lfloor \delta\varphi(T)\rfloor}\bigg)^{\lfloor v\varphi(T)\rfloor}\exp(2a\varphi(T))=
\exp\bigl\{-\lfloor v\varphi(T)\rfloor\ln\lfloor \delta\varphi(T)\rfloor+2a\varphi(T)\bigr\}\leq e^{-a\varphi(T)}.
\end{equation}
By combining (\ref{A.2}), (\ref{new3.2}), (\ref{new3.3}) and (\ref{23.11.7}), we obtain the final upper bound for $\mathbf{P}_1$: for all
$a>\frac{3c\alpha\lambda}{\lambda+\mu}(e-1)$
\begin{equation} \label{23.11.12}
\mathbf{P}_1\leq \frac{T}{c\varphi(T)}\left(e^{-\frac{\alpha\mu c}{2(\lambda+\mu)}\varphi(T)}+2e^{-a\varphi(T)}\right).
\end{equation}

Let us now bound $\mathbf{P}_2$ from above. Note,
$$
\mathbf{P}_2\leq n \max\limits_{1\leq r \leq n}
\mathbf{P}\bigg(\xi_T(1)\geq x, \inf\limits_{t\in[t_{r-1},t_{r}]}\xi_T(t)\leq\delta\bigg)
=:n\max\limits_{1\leq r \leq n}
\mathbf{P}_{2r}.
$$
To find an upper bound for $\mathbf{P}_{2r}$, we first decompose the probability $\mathbf{P}_{2r}$, in terms of the number of upward jumps within the time interval $[t_{r-1},t_r]$:
$$
\begin{aligned}
\mathbf{P}_{2r}\leq\mathbf{P}\big(\xi_T(1)\geq x, \inf\limits_{t\in[t_{r-1},t_{r}]}\xi_T(t)\leq\delta,~
 &\nu_1(t_{r})-\nu_1(t_{r-1})\leq \delta \varphi(T)\big)\\
+
\mathbf{P}\big(&\nu_1(t_{r})-\nu_1(t_{r-1})> \delta \varphi(T)\big)
=:
\mathbf{P}_{2r1}+\mathbf{P}_{2r2}.
\end{aligned}
$$
It is easy to see that, if
$$
\inf\limits_{t\in[t_{r-1},t_{r}]}\xi_T(t)\leq\delta, \ \ \ \nu_1(t_{r})-\nu_1(t_{r-1})\leq \delta,
$$
then there exists $\tau\in[t_{r-1},t_{r}]$ such that $\xi_T(\tau)\leq \delta$.
Hence, the following inequality holds
\begin{equation}\label{04.03.2}
\xi_T(t_{r})\leq \xi_T(\tau)+\nu_1(t_{r})-\nu_1(\tau)\leq
\xi_T(\tau)+\nu_1(t_{r})-\nu_1(t_{r-1})\leq 2\delta.
\end{equation}
Using  (\ref{04.03.2}), we obtain
\begin{equation} \label{22.11.3}
\begin{aligned}
	\mathbf{P}_{2r1} & \leq\mathbf{P}\bigg(\xi_T(1)\geq x, \xi_T(t_{r})\leq 2\delta\bigg) =\sum\limits_{l=0}^{\lfloor 2\delta\varphi(T) \rfloor}\mathbf{P}\bigg(\xi_T(1)\geq x, \xi(T t_{r})= l\bigg) \\
	& = \sum\limits_{l=0}^{\lfloor 2\delta\varphi(T) \rfloor}\mathbf{P}\bigg(\xi(T)\geq x\varphi(T) \ \bigg|\  \xi(T t_{r}) = l\bigg)\mathbf{P}\big(\xi(T t_{r})= l\big) \\
	& \le \sum\limits_{l=0}^{\lfloor 2\delta\varphi(T) \rfloor}\mathbf{P}\bigg(\xi\left(c\varphi(T)\left(1-\frac{r}{n}\right)\right)
	\geq x\varphi(T) \ \bigg|\ \xi(0) = l\bigg) \\
	& \leq \lfloor 2\delta\varphi(T) \rfloor\mathbf{P}\bigg(\xi\left(c\varphi(T)\left(1-\frac{r}{n}\right)\right) \geq (x-2\delta)\varphi(T)\bigg).
	\phantom{\sum\limits_{l=0}^{\lfloor 2\delta\varphi(T) \rfloor}}
\end{aligned}
\end{equation}
The last inequality follows from Corollary~\ref{c1} of Lemma~\ref{l5.1}.

Applying Theorem~\ref{t1.1} to
$$
\xi_{c\left(1-\frac{r}{n}\right)\varphi(T)}(t):=\frac{\xi\left(c\left(1-\frac{r}{n}\right)\varphi(T)t\right)}{\varphi(T)}, \ t\in[0,1],
$$
it follows that, for any $2\delta<x$,
\begin{equation} \label{new3.4}
\limsup\limits_{T\rightarrow\infty}\frac{1}{\varphi(T)}\ln\mathbf{P}
\bigg(\frac{\xi\left(c\varphi(T)\left(1-\frac{r}{n}\right)\right)}{\varphi(T)}\geq x-2\delta\bigg)
=-J_k(x-2\delta),
\end{equation}
where $k:=\frac{1}{c\left(1-\frac{r}{n}\right)}$.

It is easy to verify (see formula (\ref{22.11.5})) that for any $c>\frac{x-2\delta}{\alpha}$ and for sufficiently large $n$, the maximum of the right-hand side of
inequality (\ref{new3.4}) is attained at $r=1$, and the value is equal to
\begin{equation} \label{23.11.14}
-(x-2\delta)\ln\left(\frac{\lambda+\mu}{\lambda}\right).
\end{equation}

Now, to bound $\mathbf{P}_{2r2}$, we first apply Markov's inequality
$$
\mathbf{P}_{2r2} = \mathbf{P}\left(\nu_1\left(\frac{c\varphi(T)}{n}\right)> \delta \varphi(T)\right)=
\mathbf{P}\left(e^{\frac{2a}{\delta}\nu_1\left(\frac{c\varphi(T)}{n}\right)}\geq e^{2a \varphi(T)}\right)
\leq\frac{\mathbf{E}e^{\frac{2a}{\delta}\nu_1\left(\frac{c\varphi(T)}{n}\right)}}{e^{2a \varphi(T)}}.
$$
Then, choosing
$n\geq\frac{c\alpha\lambda}{\lambda+\mu}(e^{\frac{2a}{\delta}}-1)$,
we  obtain
\begin{equation} \label{23.11.15}
\mathbf{P}_{2r2} \le \exp\left\{\frac{c\alpha\lambda}{n(\lambda+\mu)}(e^{\frac{2a}{\delta}}-1)\varphi(T)-2a\varphi(T)\right\}
\leq e^{-a\varphi(T)}.
\end{equation}
By combining (\ref{23.11.18}), (\ref{23.11.12})\,--\,(\ref{22.11.3}), (\ref{23.11.14}), and (\ref{23.11.15}), we obtain a bound for $a>\frac{3c\alpha\lambda}{\lambda+\mu}(e-1)$, $c>\frac{x-2\delta}{\alpha}$,
 $n\geq\frac{c\alpha\lambda}{\lambda+\mu}(e^{\frac{2a}{\delta}}-1)$ and $T\rightarrow\infty$:
\begin{equation} \label{23.11.22}
\begin{aligned}
\mathbf{P}(\xi_T(1)\geq x)\leq &
\frac{T}{c\varphi(T)}\left(e^{-\frac{\alpha\mu c}{2(\lambda+\mu)}\varphi(T)}+2e^{-a\varphi(T)}\right)
+ne^{-a\varphi(T)}
\\
& +\lfloor 2\delta\varphi(T) \rfloor n\exp\left\{-\varphi(T)\left((x-2\delta)\ln\left(\frac{\lambda+\mu}{\lambda}\right)+o(1)\right)\right\}.
\end{aligned}
\end{equation}
Using (\ref{23.11.22}), for sufficiently large $a$, $c$ and $n$, as $T\rightarrow\infty$ we obtain
\begin{align*}
\mathbf{P}&(\xi_T(1)\geq x)\\
&\le \left(\frac{3T}{c\varphi(T)}+n+\lfloor 2\delta\varphi(T) \rfloor n\right)
\exp\left\{-\varphi(T)\left((x-2\delta)\ln\left(\frac{\lambda+\mu}{\lambda}\right)+o(1)\right)\right\}.
\end{align*}
Thus, for any $\delta>0$ the following inequality holds
$$
\limsup\limits_{T\rightarrow\infty}\frac{1}{\varphi(T)}\ln\mathbf{P}(\xi_T(1)\geq x)\leq
-(x-2\delta)\ln\frac{\lambda+\mu}{\lambda}.
$$
By letting $\delta\rightarrow 0$, we obtain inequality (\ref{02.03.7}).

\subsubsection*{Lower bound.}
Let us bound from below $\mathbf{P}(\xi_T(1)\geq x)$, $x>0$. By definition of the process
\begin{equation} \label{new11.11.1}
\begin{aligned}
	\mathbf{P}(\xi_T(1)\geq x) &\ge \mathbf{P}\bigl( \nu_1(T)-\nu_1(T-c\varphi(T)) \geq \varphi(T)x,\nu_2(T)-\nu_2(T-c\varphi(T))=0 \bigr) \\
	&= \mathbf{P} \bigl( \nu_1(c\varphi(T)) \geq \varphi(T)x,\nu_2(c\varphi(T))=0 \bigr).
\end{aligned}
\end{equation}
Using Lemma~\ref{l5.3}, for any $c>0$ we have
\begin{align*}
\liminf\limits_{T\rightarrow\infty}
    &\frac{1}{\varphi(T)}\ln\mathbf{P} \bigr( \nu_1(c\varphi(T)) \geq\varphi(T)x,\nu_2(c\varphi(T))=0 \bigl) \\
    &=-x\ln\left(\frac{x(\lambda+\mu)}{\alpha\lambda c}\right)+x-\frac{\alpha\lambda c}{\lambda+\mu}-\frac{\alpha\mu c}{\lambda+\mu}.
\end{align*}
Choosing $c=\frac{x}{\alpha}$, we obtain
\begin{equation} \label{23.11.24}
\liminf\limits_{T\rightarrow\infty}\frac{1}{\varphi(T)}\ln\mathbf{P}(\xi_T(1)\geq x)\geq
-x\ln\frac{\lambda+\mu}{\lambda}=-I_1(x).
\end{equation}
Using (\ref{02.03.7}), (\ref{23.11.24}), and function $I_1(x)$, we obtain for $x>0$
$$
\begin{aligned}
\lim\limits_{\varepsilon \rightarrow 0}&\liminf\limits_{T \rightarrow \infty}\frac{1}{\varphi(T)} \ln \mathbf{P}(\xi_T(1)\in (x-\varepsilon,x+\varepsilon))
\\
&
=\lim\limits_{\varepsilon \rightarrow 0}\liminf\limits_{T \rightarrow \infty}\frac{1}{\varphi(T)}
\ln \big(\mathbf{P}(\xi_T(1)> x-\varepsilon)-\mathbf{P}(\xi_T(1) \geq x+\varepsilon)\big)
\\
&=\lim\limits_{\varepsilon \rightarrow 0}\liminf\limits_{T \rightarrow \infty}\frac{1}{\varphi(T)}
\ln\mathbf{P}(\xi_T(1)> x-\varepsilon) +
\lim\limits_{\varepsilon \rightarrow 0}\liminf\limits_{T \rightarrow \infty}\frac{1}{\varphi(T)}
\ln\bigg(1-\frac{\mathbf{P}(\xi_T(1) \geq x+\varepsilon)}{\mathbf{P}(\xi_T(1) > x-\varepsilon)}\bigg)
\\
&\geq \lim\limits_{\varepsilon \rightarrow 0}\liminf\limits_{T \rightarrow \infty}\frac{1}{\varphi(T)}
\ln\mathbf{P}(\xi_T(1)> x-\varepsilon)\\
&{\quad}+
\lim\limits_{\varepsilon \rightarrow 0}\liminf\limits_{T \rightarrow \infty}\frac{1}{\varphi(T)}
\ln\bigg(1-e^{-\varphi(T)(I_1(x+\varepsilon)-I_1(x-\varepsilon)+o(1))}\bigg)
\\
&=-I_1(x)+\lim\limits_{\varepsilon \rightarrow 0}\liminf\limits_{T \rightarrow \infty}\frac{1}{T}
\ln\bigg(1-e^{-\varphi(T)\left(2\varepsilon\ln\frac{\lambda+\mu}{\lambda} +o(1)\right)}\bigg)=-I_1(x).
\end{aligned}
$$
The lower bound completes the proof of the local large deviation principle.

Since $I_1(x)$ increases continuously to infinity, exponential tightness follows from inequality (\ref{02.03.7}). %$\Box$

\subsection{Proof of Theorem~\ref{t2.3}}

From Theorem~\ref{t2.1}, it follows that Theorem~\ref{t2.3} holds for
$x=0$. We now proceed to prove the theorem for $x>0$.

Let us bound from above $\mathbf{P}(\xi_T(1)\geq x)$ for $x>0$.
Applying Markov's  inequality, we obtain for any $u>0$
$$
\mathbf{P}(\xi_T(1)\geq x)\leq \mathbf{P}(\nu_1(T)\geq \varphi(T)x)
=\mathbf{P}(e^{u\nu_1(T)}\geq e^{u\varphi(T)x})
$$
$$
\leq\frac{\mathbf{E}e^{u\nu_1(T)}}{e^{u\varphi(T)x}}
=\exp\left\{-\frac{\alpha\lambda}{\lambda+\mu}T+\frac{\alpha\lambda}{\lambda+\mu}Te^{u}
-u\varphi(T)x\right\}.
$$
Choosing $u=\ln\frac{\varphi(T)}{T}$, we have
$$
\mathbf{P}(\xi_T(1)\geq x) \leq
\exp\left\{-\frac{\alpha\lambda}{\lambda+\mu}T+\frac{\alpha\lambda}{\lambda+\mu}\varphi(T)
-x\varphi(T)\ln\frac{\varphi(T)}{T}\right\}.
$$
Thus, using the condition $\lim\limits_{T\rightarrow\infty}\frac{\varphi(T)}{T}=\infty$, we obtain
$$
\limsup\limits_{T\rightarrow\infty}\frac{1}{\varphi(T)\ln\frac{\varphi(T)}{T}}\ln
\mathbf{P}(\xi_T(1)\geq x)\leq -x= -I_2(x).
$$

Let us bound from below $\mathbf{P}(\xi_T(1)\geq x)$ for $x>0$.
Using Stirling's formula, we obtain the following as $T\rightarrow\infty$
$$
\begin{aligned}
	\mathbf{P}(\xi_T(1)\geq x) &\geq \mathbf{P}(\nu_1(T)\geq \varphi(T)x)\mathbf{P}(\nu_2(T)=0)=\mathbf{P}(\nu_1(T)\geq \varphi(T)x)e^{-\frac{\alpha\mu}{\lambda+\mu}T}
\\
&\geq
\mathbf{P}\big(\nu_1(T)= \lfloor \varphi(T)x \rfloor+1\big)e^{-\frac{\alpha\mu}{\lambda+\mu}T}
=\frac{e^{-\alpha T}
\left(\frac{\alpha\lambda}{\lambda+\mu}T\right)^{\lfloor \varphi(T)x \rfloor+1}}{(\lfloor \varphi(T)x \rfloor+1)!}
\\
&\geq
\exp\left\{- \lfloor\varphi(T)x+1 \rfloor\ln(\lfloor \varphi(T)x \rfloor+1)-\alpha T
+\lfloor \varphi(T)x \rfloor\ln\frac{T\alpha\lambda}{\lambda+\mu}\right\}
\\
&=\exp\left\{-\lfloor \varphi(T)x \rfloor\ln\frac{ \lfloor\varphi(T) \rfloor}{T}(1+o(1))\right\}.
\end{aligned}
$$
Thus,
$$
\liminf\limits_{T\rightarrow\infty}\frac{1}{\varphi(T)\ln\frac{\varphi(T)}{T}}\ln
\mathbf{P}(\xi_T(1)\geq x)\geq -x= -I_2(x).
$$
The conclusion of the proof follows the same arguments as those presented at the end of the proof of Theorem~\ref{t2.2}, and therefore, we omit it here. %$\Box$

\section{Auxiliary results}

Here, we formulate and prove some auxiliary lemmas.

\begin{lemma} \label{l5.4} Let $\nu$ be random variable with Poisson distribution
	with $\mathbf{E}\nu=\beta$.
Then for any $u\in[0,1)$, $z>0$ the following inequality holds
\begin{equation} \label{5.4}
\mathbf{P}(\nu\leq z)\leq \exp\bigg\{-\beta(1-u)
 -z\ln u \bigg\}.
\end{equation}
\end{lemma}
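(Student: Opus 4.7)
The plan is to apply the standard Chernoff bound to the lower tail of the Poisson distribution. For any $s>0$, the event $\{\nu\leq z\}$ coincides with $\{e^{-s\nu}\geq e^{-sz}\}$, so Markov's inequality gives
$$
\mathbf{P}(\nu\leq z)\leq e^{sz}\,\mathbf{E}\,e^{-s\nu}.
$$
Then I would compute the moment generating function of $\nu$ at $-s$. Since $\nu$ is Poisson with mean $\beta$,
$$
\mathbf{E}\,e^{-s\nu}=\sum_{k=0}^{\infty}e^{-sk}\,\frac{\beta^{k}e^{-\beta}}{k!}
=e^{-\beta}\sum_{k=0}^{\infty}\frac{(\beta e^{-s})^{k}}{k!}
=\exp\bigl\{-\beta(1-e^{-s})\bigr\}.
$$
Combining the two inequalities yields, for every $s>0$,
$$
\mathbf{P}(\nu\leq z)\leq \exp\bigl\{sz-\beta(1-e^{-s})\bigr\}.
$$

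To match the form stated in the lemma, I would reparametrize by setting $u=e^{-s}$, so that $s=-\ln u$ and $s$ ranges over $(0,\infty)$ as $u$ ranges over $(0,1)$. Substituting gives exactly
$$
\mathbf{P}(\nu\leq z)\leq \exp\bigl\{-\beta(1-u)-z\ln u\bigr\},
$$
which is (\ref{5.4}) for $u\in(0,1)$. The boundary case $u=0$ is trivial because $-\ln u=+\infty$ makes the right-hand side equal to $+\infty$, so the inequality holds automatically. There is no real obstacle here: the argument is just the Chernoff/Cramér transform applied to the lower tail, with the change of variable $u=e^{-s}$ chosen so that the resulting bound appears in the form convenient for the application in (\ref{23.11.11}).
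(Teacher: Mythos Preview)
Your proof is correct and essentially identical to the paper's: both apply Markov's inequality to $e^{-r\nu}$ (you write $s$ for $r$), use the Poisson moment generating function $\mathbf{E}e^{-r\nu}=\exp\{\beta(e^{-r}-1)\}$, and then substitute $u=e^{-r}$. The only addition is your explicit treatment of the boundary case $u=0$, which the paper leaves implicit.
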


\begin{proof}
Using Markov's inequality, for any $r>0$ we obtain
$$
\mathbf{P}(\nu\leq z)=
\mathbf{P}\big(\exp\{-r\nu\}\geq \exp\{-r z\}\big)
\leq\frac{\mathbf{E}\exp\{-r\nu\}}{\exp\{-r z\}}=
\exp\bigg\{\beta(e^{-r}-1)+r z\bigg\}.
$$
Choosing $r=-\ln u$, we obtain the inequality (\ref{5.4}).
\end{proof}

\begin{lemma}[\!\!{\cite[Lemma 4.1]{LYL}}] \label{l5.2}
 For any $a\in \mathbb{R}$ the following inequality holds true
$$
\mathbf{P}\bigg(\sum\limits_{k=1}^{\lfloor v\varphi(T)\rfloor}\zeta_k(\lfloor \delta\varphi(T)\rfloor)\leq 2a\varphi(T)\bigg)\leq
\bigg(\frac{1}{\lfloor \delta\varphi(T)\rfloor}\bigg)^{\lfloor v\varphi(T)\rfloor}\exp(2a\varphi(T)).
$$
\end{lemma}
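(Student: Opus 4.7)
\textbf{Proof plan for Lemma~\ref{l5.2}.} The natural approach is a Chernoff bound for the left tail of the sum $S_N := \sum_{k=1}^{N}\zeta_k(M)$, where $N := \lfloor v\varphi(T)\rfloor$ and $M := \lfloor \delta\varphi(T)\rfloor$, using exponential Markov's inequality with the specific multiplier $t=1$ chosen so that the moment generating factor cancels exactly one power of $M$ per summand.

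The key steps, in order, are the following. First, since the $\zeta_k(M)$ are i.i.d.\ and uniform on $\{1,\dots,M\}$, apply Markov's inequality to $e^{-S_N}$ to obtain
$$
\mathbf{P}\bigl(S_N \leq 2a\varphi(T)\bigr)
= \mathbf{P}\bigl(e^{-S_N} \geq e^{-2a\varphi(T)}\bigr)
\leq e^{2a\varphi(T)}\,\mathbf{E}\bigl[e^{-S_N}\bigr].
$$
Second, use independence to factor $\mathbf{E}[e^{-S_N}] = \bigl(\mathbf{E}[e^{-\zeta_1(M)}]\bigr)^{N}$ and compute the single-summand Laplace transform directly as a geometric sum:
$$
\mathbf{E}\bigl[e^{-\zeta_1(M)}\bigr]
= \frac{1}{M}\sum_{r=1}^{M} e^{-r}
= \frac{e^{-1}(1-e^{-M})}{M(1-e^{-1})}
\leq \frac{1}{M(e-1)}
\leq \frac{1}{M}.
$$
Third, substitute back and conclude
$$
\mathbf{P}\bigl(S_N \leq 2a\varphi(T)\bigr) \leq \frac{e^{2a\varphi(T)}}{M^{N}}
= \bigg(\frac{1}{\lfloor \delta\varphi(T)\rfloor}\bigg)^{\lfloor v\varphi(T)\rfloor}\exp\bigl(2a\varphi(T)\bigr),
$$
which is exactly the claimed estimate. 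For $a$ sufficiently negative the left-hand side is actually zero (since $S_N \geq N$ deterministically), so the bound is vacuously true in that regime and no separate argument is needed.

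There is no real obstacle here; the only design choice is the multiplier $t=1$ in the Chernoff step, which is dictated by the desired form of the right-hand side (a power $1/M^{N}$ with no extra constants). Any other choice of $t$ would yield an inequality of the same shape but with different geometric constants that would not match the statement cleanly. Since the lemma is used in (\ref{new3.3}) precisely in the form stated, this choice is the correct one.
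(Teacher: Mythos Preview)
Your proof is correct: the exponential Markov (Chernoff) bound with multiplier $t=1$ applied to $e^{-S_N}$, together with the elementary estimate $\mathbf{E}[e^{-\zeta_1(M)}]=\frac{1}{M}\sum_{r=1}^M e^{-r}\le \frac{1}{M(e-1)}\le \frac{1}{M}$, yields exactly the stated bound. The paper does not prove this lemma here but cites it from \cite[Lemma~4.1]{LYL}; your argument is the standard one and matches the spirit of the other Chernoff-type bounds in the paper (cf.\ Lemma~\ref{l5.4}), so there is nothing to add.
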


\begin{lemma} \label{l5.3}
The family of random variables $\frac{1}{\varphi(T)}\big(\nu_1(T)-\nu_1(T-c\varphi(T))\big)$ satisfied LDP with
normalized function $\psi(T)=\varphi(T)$ and with rate function
$$
\tilde{I}(x)= \left\{
           \begin{array}{lcl}
                              \ \infty,  \text{ if }\ x\in (-\infty,0),\\
                              x\ln\left(\frac{x(\lambda+\mu)}{\alpha\lambda c}\right)-x+\frac{\alpha\lambda c}{\lambda+\mu}, \text{ if }\ x\in [0,\infty).\\
                              \end{array}
                              \right.
$$
\end{lemma}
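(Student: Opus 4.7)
By the stationary independent increments property of the Poisson process $\nu_1$, the random variable $\nu_1(T) - \nu_1(T - c\varphi(T))$ has the same distribution as $\nu_1(c\varphi(T))$, which is Poisson with mean $\beta\varphi(T)$, where $\beta := \alpha\lambda c/(\lambda+\mu)$. So the lemma reduces to a Cramér-type LDP for the scaled Poisson family $\nu_1(c\varphi(T))/\varphi(T)$ at speed $\varphi(T)$ with rate function $\tilde I(x) = x\ln(x/\beta) - x + \beta$ on $[0,\infty)$ and $+\infty$ on $(-\infty,0)$. Following the scheme already used throughout Section~\ref{proof}, I would establish the local large deviation principle and exponential tightness, and then invoke \cite[Lemma 4.1.23]{DZ}.

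For the LLDP upper bound at a point $x \ge 0$, a standard Chernoff argument applied to both tails of $\nu_1(c\varphi(T))$ gives, for any $u \in \mathbb{R}$,
$$
\mathbf{P}\bigl(\nu_1(c\varphi(T)) \ge (x-\varepsilon)\varphi(T)\bigr) \le \exp\bigl\{-u(x-\varepsilon)\varphi(T) + \beta\varphi(T)(e^u-1)\bigr\},
$$
and symmetrically for the left tail (compare the calculation at the start of the proof of Theorem~\ref{t2.3}, and Lemma~\ref{l5.4} for the lower tail). Optimizing over $u$ yields the Legendre transform $\tilde I(x-\varepsilon)$ (respectively $\tilde I(x+\varepsilon)$), and sending $\varepsilon \to 0$ together with the continuity of $\tilde I$ on $[0,\infty)$ produces
$$
\lim_{\varepsilon\to 0}\limsup_{T\to\infty}\frac{1}{\varphi(T)}\ln\mathbf{P}\bigl(\nu_1(c\varphi(T))/\varphi(T) \in U_\varepsilon(x)\bigr) \le -\tilde I(x).
$$
For $x < 0$ the probability is identically zero, matching $\tilde I(x) = \infty$.

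For the matching lower bound I would use the explicit Poisson mass function on a single well-chosen integer. For $x > 0$ and $\varepsilon > 0$, set $k_T := \lfloor x\varphi(T)\rfloor$; for large $T$ we have $k_T/\varphi(T) \in U_\varepsilon(x)$, so
$$
\mathbf{P}\bigl(\nu_1(c\varphi(T))/\varphi(T) \in U_\varepsilon(x)\bigr) \ge \frac{e^{-\beta\varphi(T)}(\beta\varphi(T))^{k_T}}{k_T!}.
$$
Taking logarithms and applying Stirling's formula—exactly as in the lower-bound calculation of Theorem~\ref{t2.3}—yields $-\tilde I(x) + o(1)$ after dividing by $\varphi(T)$. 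The case $x=0$ is immediate since $\mathbf{P}(\nu_1(c\varphi(T))=0) = e^{-\beta\varphi(T)}$ and $\tilde I(0) = \beta$. Combined with the upper bound, this establishes LLDP.

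Exponential tightness is an easy consequence of the Chernoff upper tail bound: since $\tilde I(x) \to \infty$ as $x \to \infty$ while the left tail is trivial (the variable is nonnegative), for any $c > 0$ the compact set $K_c := [0, M_c]$ with $M_c$ large enough does the job. Applying \cite[Lemma 4.1.23]{DZ} then gives the full LDP and finishes the proof. I do not anticipate a serious obstacle: this is essentially Cramér's theorem for Poisson random variables with diverging parameter, and every needed estimate (Chernoff, Stirling, tail optimization) has already been deployed in the proofs of Theorems~\ref{t2.2} and~\ref{t2.3}; the only care needed is matching the constants in $\tilde I$ with the Legendre transform of $u \mapsto \beta(e^u - 1)$.
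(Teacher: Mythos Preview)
Your proposal is correct: the reduction via stationary independent increments to a Poisson($\beta\varphi(T)$) variable, followed by Chernoff bounds for the upper estimate and a single-point Stirling computation for the lower estimate, is exactly the standard Cram\'er argument and yields $\tilde I$ as the Legendre transform of $u\mapsto\beta(e^u-1)$. The paper does not give an independent proof of this lemma at all---it simply notes that the argument follows directly from the proof of Lemma~4.1 in \cite{LYL} and omits it---so your write-up is in effect supplying the details the authors chose to suppress by reference; there is no alternative approach to compare against.
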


\begin{proof}
The proof follows directly from the proof of Lemma 4.1 \cite{LYL}, so we omit it for brevity.
\end{proof}

Denote $\xi_x(t)$ the copy of random process $\xi(t)$, such that $\xi(0)=x$, $x\in \mathbb{Z}^+$.

\begin{lemma} \label{l5.1}
For any $x,y\in \mathbb{Z}^+$ there exist random processes $\tilde{\xi}(t)$ and $\hat{\xi}(t)$
such that $\tilde{\xi}(t)\stackrel{d}{=}\xi_x(t)$, $\hat{\xi}(t)\stackrel{d}{=}\xi_y(t)$, and the following equality holds
\begin{equation} \label{new5.1}
 \mathbf{P}\left(\sup\limits_{t\geq 0}|\tilde{\xi}(t)-\hat{\xi}(t)|\leq|x-y|\right)=1.
\end{equation}
\end{lemma}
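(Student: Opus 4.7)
The plan is to realize $\tilde\xi$ and $\hat\xi$ on a common probability space via a monotone coupling based on the representation (\ref{4.1}). Without loss of generality I assume $x \ge y$ and set $d := x - y$. I would take independent Poisson processes $\nu_1$, $\nu_2$ with rates $\alpha\lambda/(\lambda+\mu)$ and $\alpha\mu/(\lambda+\mu)$, and, independently of them, an i.i.d.\ sequence $\{U_k\}_{k \ge 1}$ of $\mathrm{Uniform}[0,1]$ random variables. Let $\tau_1 < \tau_2 < \cdots$ denote the jump times of $\nu_2$. Both coupled processes will share $\nu_1$ for growth, $\nu_2$ for catastrophe arrivals, and the same $U_k$ at the $k$th catastrophe.

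I would define the two processes pathwise starting from $\tilde\xi(0) = x$, $\hat\xi(0) = y$: at every jump of $\nu_1$ both processes increase by $1$; at the $k$th catastrophe time $\tau_k$ any pre-jump state $m \ge 1$ is sent to $\lfloor m U_k \rfloor$, while the state $0$ is deterministically reflected to $1$. Since $\lfloor m U_k \rfloor$ is uniformly distributed on $\{0, 1, \dots, m-1\}$ for each fixed $m \ge 1$, and the $U_k$'s are i.i.d.\ and independent of $(\nu_1, \nu_2)$, each of $\tilde\xi$ and $\hat\xi$ is a Markov process with exactly the transition mechanism defining $\xi$ in (\ref{defxi}). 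By uniqueness of the law of such a Markov process, this yields $\tilde\xi \stackrel{d}{=} \xi_x$ and $\hat\xi \stackrel{d}{=} \xi_y$.

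For the pathwise estimate I would set $D(t) := \tilde\xi(t) - \hat\xi(t)$, so $|D(0)| = d$, and show by induction over the successive jumps of $\nu_1 + \nu_2$ that $t \mapsto |D(t)|$ is non-increasing. Growth jumps shift both processes by $+1$ and leave $D$ unchanged. At a catastrophe $\tau_k$ with pre-jump states $m \ge n \ge 1$, the new difference $\lfloor m U_k \rfloor - \lfloor n U_k \rfloor$ is non-negative (because $\lfloor \cdot \rfloor$ is non-decreasing) and is an integer strictly smaller than $(m - n) U_k + 1 \le (m - n) + 1$, hence lies in $\{0, 1, \dots, m-n\}$; thus $|D|$ weakly decreases. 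The case $n \ge m \ge 1$ is symmetric.

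The remaining point, and the main technical obstacle, is a catastrophe at which one of the processes sits at $0$ and is deterministically reflected to $1$ while the other is reset through $\lfloor \cdot U_k \rfloor$; here there is no common randomness driving the two transitions. If, for instance, $m \ge 1$ and $n = 0$, so $|D(\tau_k -)| = m$, then the new difference equals $\lfloor m U_k \rfloor - 1 \in \{-1, 0, \dots, m - 2\}$, which gives $|D(\tau_k)| \le \max(1, m - 2) \le m = |D(\tau_k -)|$. The cases $m = 0$, $n \ge 1$ and $m = n = 0$ are handled analogously. Combining the two pieces, the map $t \mapsto |D(t)|$ is almost surely non-increasing, so $|D(t)| \le |D(0)| = d$ for all $t \ge 0$, which is precisely the assertion (\ref{new5.1}).
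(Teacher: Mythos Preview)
Your argument is correct and follows the same overall strategy as the paper: build a monotone coupling of the two catastrophe mechanisms on common randomness, then show by induction over successive jump times that the absolute difference $|\tilde\xi(t)-\hat\xi(t)|$ is non-increasing, including the boundary case where one process sits at $0$ and is reflected to $1$.

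The only genuine difference is in the \emph{implementation} of the coupling. You drive both catastrophe maps by a single continuous uniform $U_k$ via $m\mapsto\lfloor mU_k\rfloor$, so each marginal process is manifestly a deterministic functional of $(\nu_1,\nu_2,(U_k)_k)$ and its own initial state; the marginal-law claim is then immediate. The paper instead couples through a discrete uniform $\zeta_k(ab)$ on $\{1,\dots,ab\}$, where $a,b$ are the \emph{two} pre-jump states, and extracts the individual catastrophe sizes by integer division; this makes the catastrophe for $\tilde\xi$ formally depend on $\hat\xi$'s state, so the marginal-law check requires an extra word. The resulting pairings are in fact the same comonotone coupling, and the subsequent case analysis (including the reflection-at-zero case, where the sign of the difference may flip but its modulus still weakly decreases) is identical in substance. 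Your version is slightly cleaner; the paper's is slightly more self-contained in that it stays within the discrete random variables $\zeta_k(\cdot)$ already introduced in~(\ref{4.1}).
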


\begin{proof}
Define the random variables
$$
\tilde{\zeta}_k(x,y):=\left\lfloor \frac{\zeta_k(xy)-1}{y}\right\rfloor +1, \ \ \ \hat{\zeta}_k(x,y):=\left\lfloor \frac{\zeta_k(xy)-1}{x}\right\rfloor+1,
\text{ if } \min(x,y)>0,
$$
$$
\tilde{\zeta}_k(x,y)=\zeta_k(x), \ \ \ \hat{\zeta}_k(y)=\zeta_k(y), \text{ if } \min(x,y)=0.
$$
It is straightforward to verify that the random variables $\tilde{\zeta}_k(x,y)$, for $k \in \mathbb{Z}^+$ and $xy \in \mathbb{Z}^+$, are mutually independent and do not depend on $\nu_1(t)$ or $\nu_2(t)$. Similarly, the random variables $\hat{\zeta}_k(x,y)$, for $k \in \mathbb{Z}^+$ and $xy \in \mathbb{Z}^+$, are mutually independent and also do not depend on $\nu_1(t)$ or $\nu_2(t)$.

Note that if $\min(x,y)>0$, $k\in \mathbb{N}$, $1\leq r\leq x$, $1\leq v\leq y$, $k\in \mathbb{N}$, then
$$\mathbf{P}(\tilde{\zeta}_k(x,y)=r)=\mathbf{P}(\zeta_k(xy)\in [(r-1)y+1,ry])=1/x=\mathbf{P}(\zeta_k(x)=r),$$
$$\mathbf{P}(\hat{\zeta}_k(x,y)=v)=\mathbf{P}(\zeta_k(xy)\in [(v-1)x+1,vx])=1/y=\mathbf{P}(\zeta_k(y)=v).$$
Thus, the distributions of $\tilde{\zeta}_k(x,y)$ and $\hat{\zeta}_k(x,y)$ coincide with the distribution of $\zeta_k(x)$ and $\zeta_k(y)$, respectively.

Therefore, the distributions of random processes
$$
\tilde{\xi}(t)=x+\nu_1(t)-\sum\limits_{k=0}^{\nu_2(t)}\tilde{\zeta}_k(\tilde{\xi}(\tau_k-),\hat{\xi}(\tau_k-)),
$$
$$
\hat{\xi}(t)=y+\nu_1(t)-\sum\limits_{k=0}^{\nu_2(t)}\hat{\zeta}_k(\tilde{\xi}(\tau_k-),\hat{\xi}(\tau_k-))
$$
coincide with the distributions of $\xi_x(t)$ and $\xi_y(t)$, respectively.

If $x=y$, then
\begin{equation} \label{21.11.1}
\tilde{\zeta}_k(x,y)-\hat{\zeta}_k(x,y)=0 \text{ a.s.}
\end{equation}
Let $x> y$,  $\min(x,y)>0$, then, using inequalities $\big\lfloor \frac{a}{y}\big\rfloor\leq\frac{a}{y}$,
$\big\lfloor \frac{a}{x}\big\rfloor\geq\frac{a}{x}-\frac{x-1}{x}$, $a\in \mathbb{Z}^+$
and $\zeta_k(xy)\leq xy$ a.s., we obtain
\begin{equation} \label{21.11.2}
\begin{aligned}
0 & \leq\tilde{\zeta}_k(x,y)-\hat{\zeta}_k(x,y)=\left\lfloor \frac{\zeta_k(xy)-1}{y}\right\rfloor-\left\lfloor\frac{\zeta_k(xy)-1}{x}\right\rfloor
\\
&\leq \frac{(\zeta_k(xy)-1)(x-y)}{xy}+\frac{x-1}{x}
\\
&\leq x-y - \frac{(x-y)}{xy}+\frac{x-1}{x}= x-y +1 - \frac{1}{y}  \ \ \text{ a.s.}
\end{aligned}
\end{equation}
Since $\hat{\zeta}_k(x,y)-\tilde{\zeta}_k(x,y)$ is integer, from (\ref{21.11.2}), it follows that if the inequalities $x> y$ and  $\min(x,y)>0$ hold, then
\begin{equation} \label{new5.2}
0\leq\tilde{\zeta}_k(x,y)-\hat{\zeta}_k(x,y)
\leq x-y \text{ a.s.}
\end{equation}
A similar inequality also holds for $y> x$ and $\min(x,y)>0$.

If $x> y$ and $y=0$, then
\begin{equation} \label{new5.3}
2 \leq \tilde{\zeta}_k(x,y)-\hat{\zeta}_k(x,y)=\zeta_k(x)-\zeta_k(0)
=\zeta_k(x)+1
\leq x+1  \text{ a.s.}
\end{equation}
A similar inequality also holds for $y> x$ and  $x=0$.

We complete the proof by mathematical induction.
It is obvious that for $t\in [0,\tau_1)$
$$
|\tilde{\xi}(t)-\hat{\xi}(t)|=|(x+\nu_1(t))-(y+\nu_1(t))|=|x-y|  \text{ a.s.}
$$
Let $x\geq y$. Using inequalities (\ref{21.11.1})--(\ref{new5.3}) we obtain
$$
\begin{aligned}
|\tilde{\xi}(\tau_1)-\hat{\xi}(\tau_1)|=& \Bigl|(x+\nu_1(\tau_1)-\tilde{\zeta}_1(x+\nu_1(\tau_1-),y+\nu_1(\tau_1-))
\\
& {} - (y+\nu_1(\tau_1)-\hat{\zeta}_1(x+\nu_1(\tau_1-),y+\nu_1(\tau_1-)) \Bigr|
\\
= & \Big|x-y-\Big(\tilde{\zeta}_1(x+\nu_1(\tau_1-),y+\nu_1(\tau_1-))
-\hat{\zeta}_1(x+\nu_1(\tau_1-),y+\nu_1(\tau_1-))\Big)\Big|
\\
\leq &
\begin{cases}|x-y|, \text{ if } x\geq y, \ y\neq 0,\\
\max(1,x-2), \text{ if } x> y, \ y=0,\\
0, \text{ if } x=y=0,
\end{cases}
\\
\leq & |x-y| \text{ a.s.}
\end{aligned}
$$
The case $y>x$ is treated in the same way. So the inequality
\begin{equation}\label{03.03.7}
|\tilde{\xi}(t)-\hat{\xi}(t)|\leq |x-y|  \text{ a.s.}
\end{equation}
holds for $t\in [0,\tau_1]$.

Suppose that inequality (\ref{03.03.7}) holds for $t\in [0,\tau_{n-1}]$. Then, using the inductive hypothesis, for $t\in [\tau_{n-1},\tau_n)$, we have
$$
|\tilde{\xi}(t)-\hat{\xi}(t)|=|(\tilde{\xi}(\tau_{n-1})+\nu(t)-\nu(\tau_{n-1}))-(\hat{\xi}(\tau_{n-1})+\nu(t)-\nu(\tau_{n-1}))|\leq |x-y|  \text{ a.s.}
$$
Let $\tilde{\xi}(\tau_{n-1})=z$, $\hat{\xi}(\tau_{n-1})=u$, and suppose that $z\geq u$. By the inductive hypothesis, we have $z-u\leq|x-y|$. Using inequality (\ref{21.11.1})--(\ref{new5.3}) and the inductive hypothesis, it follows that
$$
\begin{aligned}
|\tilde{\xi}(\tau_n)&-\hat{\xi}(\tau_n)| \\
& = \Bigl|\Big(\tilde{\xi}(\tau_{n-1})+\nu(\tau_n)-\nu(\tau_{n-1})-
\tilde{\zeta}_n\big(\tilde{\xi}(\tau_{n-1}) \\
&{\qquad} +\nu(\tau_n-)-\nu(\tau_{n}),\hat{\xi}(\tau_{n-1})+\nu(\tau_n-)-\nu(\tau_{n-1})\big)\Big) \\
&\phantom{=\big|}-\Big(\hat{\xi}(\tau_{n-1})+\nu(\tau_n)-\nu(\tau_{n-1})
-\hat{\zeta}_n\big(\hat{\xi}(\tau_{n-1}) \\
&{\qquad\quad} +\nu(\tau_n-)-\nu(\tau_{n}),\hat{\xi}(\tau_{n-1})+\nu(\tau_n-)-\nu(\tau_{n})\big)\Big)\Bigr| \\
&=\Big(z-u-\Big(\tilde{\zeta}_n\big(z+\nu(\tau_n-)-\nu(\tau_{n}),u+\nu(\tau_n-)-\nu(\tau_{n-1})\big) \\
&\phantom{=\big(} -\hat{\zeta}_n\big(z+\nu(\tau_n-)-\nu(\tau_{n}),u+\nu(\tau_n-)-\nu(\tau_{n})\big)\Big)\Big) \\
&\leq |z-u|\leq |x-y| \ \  \text{ a.s.}
\end{aligned}
$$
The case $u>z$ is treated in the same way. Thus, the inequality (\ref{03.03.7}) holds for $t\in [0,\tau_{n}]$ and $n\in \mathbb{N}$.

Since $\lim\limits_{n\rightarrow\infty}\tau_n=\infty$ a.s., the proof is complete.
\end{proof}

\begin{corollary} \label{c1} It follows directly from Lemma~\ref{l5.1} that for any integers $0\leq x<y$, $z\in \mathbb{R}$, and $t\geq 0$, the following inequality holds:
$$
\mathbf{P}(\xi_y(t)>z)\leq \mathbf{P}(\xi_x(t)>z-|x-y|).
$$
\end{corollary}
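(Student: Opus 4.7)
The plan is very short because Lemma~\ref{l5.1} already does the heavy lifting by producing a coupling; the corollary is essentially a translation from an almost-sure pathwise bound to a statement about one-dimensional marginals.

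\medskip

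First I would invoke Lemma~\ref{l5.1} with the given $x,y\in\mathbb{Z}^+$ (with $x<y$, so $|x-y|=y-x$) to obtain, on some common probability space, two processes $\tilde\xi$ and $\hat\xi$ with $\tilde\xi\stackrel{d}{=}\xi_x$, $\hat\xi\stackrel{d}{=}\xi_y$, and
$$
\mathbf{P}\!\left(\sup_{t\ge 0}|\tilde\xi(t)-\hat\xi(t)|\le |x-y|\right)=1.
$$
On the almost-sure event where this sup bound holds, at the fixed time $t$ in the statement we have
$$
\hat\xi(t)-\tilde\xi(t)\le |x-y|,
\quad\text{so}\quad
\tilde\xi(t)\ge \hat\xi(t)-|x-y|.
$$
Consequently, for any real $z$,
$$
\{\hat\xi(t)>z\}\subseteq \{\tilde\xi(t)>z-|x-y|\}\quad\text{a.s.}
$$

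\medskip

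Taking probabilities of both sides and using the distributional identifications $\hat\xi(t)\stackrel{d}{=}\xi_y(t)$ and $\tilde\xi(t)\stackrel{d}{=}\xi_x(t)$ yields
$$
\mathbf{P}(\xi_y(t)>z)=\mathbf{P}(\hat\xi(t)>z)\le \mathbf{P}(\tilde\xi(t)>z-|x-y|)=\mathbf{P}(\xi_x(t)>z-|x-y|),
$$
which is exactly the claim. The argument is symmetric in the roles of $x$ and $y$, so no separate treatment of the case $y<x$ is needed.

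\medskip

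There is no genuine obstacle here: the only subtlety is the conceptual one of recognising that Lemma~\ref{l5.1} is not just a distributional comparison but an almost-sure coupling, which is precisely what permits passing from the pathwise inequality to the inequality between marginal probabilities. Everything else is bookkeeping with the absolute value $|x-y|$.
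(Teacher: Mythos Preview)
Your proof is correct and follows essentially the same approach as the paper: invoke the coupling of Lemma~\ref{l5.1}, use the almost-sure pathwise bound to obtain the event inclusion $\{\hat\xi(t)>z\}\subseteq\{\tilde\xi(t)>z-|x-y|\}$, and then pass to probabilities via the distributional identifications. The paper's version is written as a single chain of (in)equalities rather than spelling out the event inclusion, but the content is identical.
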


\begin{proof}
Lemma \ref{l5.1} implies
\begin{align*}
\mathbf{P}(\xi_y(t)>z)&=\mathbf{P}(\tilde{\xi}_y(t)>z)=
\mathbf{P}\left(\hat{\xi}_y(t)>z,\sup\limits_{t\geq 0}|\tilde{\xi}_x(t)-\hat{\xi}_y(t)|\leq|x-y|\right)
\\ &\leq\mathbf{P}\left(\hat{\xi}_y(t)>z,\tilde{\xi}_x(t)>z-|x-y|\right)
\leq\mathbf{P}\left(\tilde{\xi}_x(t)>z-|x-y|\right).
\qedhere
\end{align*}
\end{proof}

\subsection*{Acknowledgments} This research was funded by RSF grant number 24-28-01047. Yambartsev A. thanks the S\~ao Paulo Research Foundation FAPESP, Brazil, for support under grant 2023/13453-5.

\subsection*{Author Contributions} The work on the introduction, literature review, and editing was carried out by A.Y. and O.L.; A.L. and O.L. did the proofs of the main theorems and auxiliary results.

%%% REFERENCES %%%
{\small\bibliography{cimart}}
% Please, do not change the above line and do not insert your references
% into this file.  Instead, insert your references into the cimart.bib file.
% See cimart.bib for further instructions.

\EditInfo{December 5, 2024}{March 10, 2025}{Ana Cristina Moreira Freitas}

\end{document}